\documentclass[10pt]{amsart}

\usepackage{amsmath}
\usepackage{amssymb}
\usepackage{amsthm}
\usepackage{amsfonts}
\usepackage{array}
\usepackage{euscript}
\usepackage{graphics,graphicx}
\usepackage{cancel}
\usepackage{fancybox}
\usepackage{verbatim}
\usepackage{paralist}
\usepackage{setspace}
\usepackage{fancyhdr}
\usepackage{hyperref}
\usepackage{url}
\usepackage[margin=2cm]{geometry}
\usepackage[normalem]{ulem}

\usepackage{tikz}
\usetikzlibrary{arrows,matrix, patterns}

\newtheoremstyle%
{Theorem}%
{}%
{}%
{\itshape}%
{}%
{}%
{.}%
{ }%
{\thmname{\bfseries #1}%
\thmnumber{\;\bfseries #2}%
\thmnote{\;(\bfseries #3)}}%

\theoremstyle{Theorem}
\newtheorem{thm}{Theorem}[section]

\newtheorem{lem}[thm]{Lemma}
\theoremstyle{definition}
\newtheorem{dff}[thm]{Definition}
\newtheorem{xmp}[thm]{Example}

\newtheorem{rmk}[thm]{Remark}

\newcommand{\FF}{\mathbf F}
\newcommand{\ZZ}{\mathbf Z}
\newcommand{\QQ}{\mathbf Q}
\newcommand{\NN}{\mathbf N}

\newcommand{\id}{\operatorname{id}}

\newcommand{\fm}{\mathfrak{m}}
\newcommand{\cA}{\mathcal{A}}
\newcommand{\cE}{\mathcal{E}}
\newcommand{\cF}{\mathcal{F}}

\newcommand{\cP}{\mathcal{P}}
\renewcommand{\d}{\delta}
\def\ur{\textrm{ur}}
\def\alg{\textrm{alg}}
\def\ux{\underline{x}}
\def\cs{s}

\def\cN{\mathcal{N}}

\def\Gal{\operatorname{Gal}}

\colorlet{DG}{green!50!black}
\colorlet{DO}{orange!50!black}
\colorlet{DR}{red!50!black}
\colorlet{DB}{blue!50!black}
\colorlet{DP}{purple!50!black}

\title{Arithmetic partial differential operators on ramified extensions of $\ZZ_p$}
\author{Marshall Donn}
\address{Mathematical Sciences Bldg, MATH 605,
150 N. University St.,
West Lafayette, IN 47097}
\email{mdonn@purdue.edu} 

\author{Lance Edward Miller}
\address{Department of Mathematical Sciences,  SCEN 355,
University of Arkansas, 
Fayetteville, AR 72701}
\email{lem016@uark.edu}

\author{William D.\ Taylor}
\address{Department of Mathematical Sciences, Boswell 316B,
Tennessee State University, 
Nashville, TN 37209}
\email{wtaylo17@tnstate.edu}

\begin{document}

\begin{abstract}
The notion of $p$-derivation as introduced by Buium has a rich history of applications in arithmetic geometry. Working over $\ZZ_p$, Buium-Ralph-Simanca showed that arithmetic differential operators built from these determine $p$-adic analytic functions and vice versa. Notably, over $\ZZ_p$, there is only one $p$-derivation. In this article, we consider the same question for ramified extensions $A_\pi$ with uniformizer $\pi$. This has the effect of passing from ordinary to partial differential operators, since such extensions can enjoy multiple $\pi$-derivations. We introduce a notion of analytic functions which are naturally determined by these operators in a way analogous to that in the unramified case, but with a significantly richer structure.  We show that under mild conditions, analytic functions and partial differential operators determine each other in this setting.

\end{abstract}
\maketitle

\section{Introduction}

We build on an arithmetic analog of differential equations pioneered by Buium and introduced in \cite{Bui96}, for a thorough treatment see \cite{Bui05} and the subsequent references. This subject rests on the development of a remarkable analogy between differentiation and the Frobenius map. Fix throughout a prime integer $p$. Recall a {\bf lift of Frobenius} on a ring $A$ is a ring homomorphism $\phi \colon A \to A$ so that $\phi(x) \equiv x^p \bmod p$. When $A$ is $p$-torsion free, associated to $\phi$ is a {\bf $p$-derivation} $\delta \colon A \to A$ defined by 
$$\delta (a) := \frac{\phi(a) - a^p}{p}\, \textrm{ for } a \in A.$$ 
This is interpreted as an arithmetic differentiation or as in line with the perspectives here, a differentiation in the `arithmetic direction' $p$. A basic example to consider is $A = \ZZ_p$ and $\phi = \id$. Passing to the completed maximal unramified extension of $\QQ_p$ it is important to note that its ring of integers, denoted $R$, still supports a unique such ring homomorphism. That is, there is only {\it one} $p$-derivation on $R$. Viewing polynomial expressions in $\d$ as analogs of differential equations, these are {\it ordinary differential equations}. This theory has generated a substantial number of applications to arithmetic geometry via finiteness type theorems in Diophantine geometry \cite{Bui96,BP09}. These applications come largely from the arithmetic analogs of Manin maps \cite{Bui95} as well as the theory of $\delta$-modular forms \cite{Bui00,Bui08}. Through this lens, problems are translated into a theory of ordinary arithmetic differential equations, which then can be solved. Additionally, an entirely new subject aimed at honing this analogy into a lens which transforms tools from differential geometry into tools aimed at purely arithmetic applications was again pioneered by Buium \cite{Bui17,Bui19}. 

\

There is a substantial difference between $\ZZ_p$ and $R$ however. Roughly, \cite{BRS11} shows that, working in $\ZZ_p$ the basic operations coming from $p$-derivations are intimately connected to classic theories of $p$-adic analytic functions. In fact, the main theorem of \cite{BRS11} is that the theories determine each other. This is far from the case when working over $R$, and so the $\d$-operation can be viewed as an algebraic replacement of more analytic techniques. 

\

We give a precise recap of the connections over $\ZZ_p$. In \cite{BRS11}, two kinds of functions are considered. The first are the {\it arithmetic differential operators of order $r$}, those functions $f \colon \ZZ_p \to \ZZ_p$ of the form $f(a) = F(a,\d (a), \ldots, \d^r (a))$ for a restricted power series $F \in \ZZ_p[[x_0,x_1,\ldots,x_r]]$. 
The second are the {\it $p$-adic analytic functions of level $r$}, which are functions $f \colon \ZZ_p \to \ZZ_p$ which for each $a \in \ZZ_p$, there is a restricted power series $F_a \in \ZZ_p[[x]]$ so that $f(a + p^r u) = F_a(u)$. Their main theorem is the following, see \cite[Thm. 4,5]{BRS11}.

\begin{thm}[Buium-Ralph-Simanca] 
A function $f \colon \ZZ_p \to \ZZ_p$ is an arithmetic differential operator of order $r$ if and only if it is analytic of level $r$.  
\end{thm}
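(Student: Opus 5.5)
Over $\ZZ_p$ with $\phi=\id$ we have $\d(a)=(a-a^p)/p$. The plan is to prove the two implications separately, by induction on $r$ where possible, using the basic expansion
$$\d(a+p^r u)=\d(a)+p^{r-1}u-\sum_{k\ge 1}\binom pk p^{kr-1}a^{p-k}u^k .$$
This shows $\d(a+p^ru)=\d(a)+p^{r-1}G_a(u)$ for a polynomial $G_a\in\ZZ_p[u]$ with $G_a(u)\equiv u \bmod p$ when $r\ge 1$. A key point is that $p\mid\binom pk$ for $0<k<p$, and $p^{pr-1}$ has exponent at least $r$, so the division by $p$ causes no trouble.

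\emph{Operator $\Rightarrow$ analytic.} Let $f(a)=F(a,\d a,\dots,\d^r a)$ with $F$ restricted. I claim by induction on $j\le r$ that for each $a$ there is a polynomial (or restricted series) $H_{a,j}\in\ZZ_p[u]$ with
$$\d^j(a+p^ru)=\d^j(a)+p^{r-j}H_{a,j}(u).$$
The inductive step applies the expansion above with $r$ replaced by $r-j$, for $j<r$, and $u$ replaced by $H_{a,j}(u)$. Then $\d^j(a+p^ru)$ is a restricted series in $u$ for every $j\le r$; for $j=r$ we only use that $\d^r(a)+H_{a,r}(u)$ is a polynomial in $u$. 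Substituting restricted series with $\ZZ_p$ coefficients into a restricted $F$ gives a restricted series, since the coefficients still tend to $0$. This yields $F_a$ with $f(a+p^ru)=F_a(u)$.

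\emph{Analytic $\Rightarrow$ operator.} This is the main obstacle, because we must produce one global $F$. I would use the ``$p$-adic digit'' structure. Write $a=\sum_{i\ge0}[a_i]p^i$ with Teichmüller digits, or take $a_i\in\{0,\dots,p-1\}$. The first step is to show that $a\bmod p^r$, more precisely the residue classes of $a,\d a,\dots,\d^{r-1}a$ modulo $p$, determine the class of $a$ mod $p^r$. The second step is to show that $\d^r a$ recovers the remaining variable $u$ up to an analytic change of variables. Concretely, fix the $p^r$ representatives $c$ of $\ZZ_p/p^r$. On the disk $c+p^r\ZZ_p$ write $a=c+p^ru$ and $f(a)=F_c(u)$. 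Using the first implication in reverse, $\d^r(c+p^ru)=\d^r(c)+H_{c,r}(u)$ with $H_{c,r}(u)\equiv u\bmod p$. Hence $u$ is a restricted series in $\d^r a-\d^r c$, obtained by inverting a series congruent to the identity mod $p$ by successive approximation. So on each disk, $f(a)=\Phi_c(\d^r a)$ with $\Phi_c$ restricted.

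It remains to glue these $p^r$ pieces using restricted functions of $a,\d a,\dots,\d^{r-1}a$ that act as indicator functions of the disks. The plan is to obtain these from polynomials in $x_0,\dots,x_{r-1}$ which equal $1$ or $0$ modulo $p$ according to the reduced digits. This uses Lagrange interpolation on $\FF_p$: the polynomial $1-(x-b)^{p-1}$ is the indicator of $b$ mod $p$. We then refine to exact $p$-adic idempotents through the limit $e\mapsto\lim e^{p^n}$. This limit is a uniform limit of polynomials, and the delicate point is to ensure that it is still a restricted power series. I expect this to be handled by noting that the digits mod $p$ of $a$ are polynomial functions mod $p$ of $a,\d a,\dots$: for instance, $a_1\equiv \d(a_0)$-type corrections, computed via Witt vector formulas. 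Then the sum $F=\sum_c \epsilon_c(x_0,\dots,x_{r-1})\,\Phi_c(x_r)$, with approximate idempotents $\epsilon_c$, is corrected iteratively: the error is divisible by $p$, so we repeat the construction on $(f-F)/p$ and sum a $p$-adically convergent series of restricted power series. This successive-approximation step is where I expect the real work to lie.
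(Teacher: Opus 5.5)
Your forward direction is correct and is the paper's argument: Theorem~\ref{thm:PADOtoAnal} with the expansion of Theorem~\ref{lma1}, specialized to $n=1$, $\pi=p$, $\phi=\id$. Your local step in the converse is also sound. Since $H_{c,r}(u)\equiv u \bmod p$, successive approximation gives $K_c\in\ZZ_p\langle y\rangle$ with $H_{c,r}(K_c(y))=y$. Since $H_{c,r}$ is injective on $\ZZ_p$, you get $f(a)=\Phi_c(\delta^r a)$ on $c+p^r\ZZ_p$.

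The gap is in the gluing, which is the step you flagged as the real work. There are two problems.

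\emph{The exact idempotents cannot exist.} Suppose a restricted $\epsilon(x_0,\dots,x_{r-1})$ satisfied $\epsilon(a,\delta a,\dots,\delta^{r-1}a)=\mathbf{1}_{c+p^r\ZZ_p}(a)$ for all $a$. By your own first direction this function would be analytic of level $r-1$. Then $u\mapsto\mathbf{1}_{p\ZZ_p}(u)$ would be given on $\ZZ_p$ by a restricted series in $u$, which Strassmann's theorem forbids. So the pointwise limit of $e^{p^n}$ is never of the form $\epsilon(a,\dots,\delta^{r-1}a)$ with $\epsilon$ restricted, and that branch must be abandoned.

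\emph{The fallback iteration is not justified.} Set $\nabla a:=(a,\delta a,\dots,\delta^{r-1}a)$. To ``repeat the construction'' you need $f_1:=\bigl(f-F^{(0)}(\nabla a,\delta^r a)\bigr)/p$ to be analytic of level $r$ again. Pointwise divisibility of the error by $p$ does not give this: $u^p-u\in p\ZZ_p$ for all $u\in\ZZ_p$, yet $(u^p-u)/p=-\delta(u)$ is not analytic of level $0$. What you need is that the local expansions of the error are divisible by $p$ coefficientwise, and the proposal never checks this.

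It does hold, and it follows from your own formula.
\begin{enumerate}
\item For $j\le r-1$ you have $\delta^j(c+p^ru)-\delta^j(c)=p^{r-j}H_{c,j}(u)\in p\ZZ_p[u]$.
\item Let $P_{c'}\in\ZZ_p[x_0,\dots,x_{r-1}]$ lift the Lagrange indicator of the residue vector of $\nabla c'$. By (1), $P_{c'}(\nabla(c+p^ru))$ is congruent mod $p$, coefficientwise in $\ZZ_p[u]$, to the constant $P_{c'}(\nabla c)\equiv\mathbf{1}_{c=c'}$.
\item That last congruence uses injectivity of $a\mapsto\nabla a \bmod p$ on $\ZZ_p/p^r$. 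This again follows from your formula, applied at level equal to the $p$-adic order of $a-a'$.
\item Hence on $c+p^r\ZZ_p$, as functions of $u$, $f-F^{(0)}(\nabla a,\delta^r a)=-\sum_{c'}\bigl(P_{c'}(\nabla(c+p^ru))-\mathbf{1}_{c=c'}\bigr)\Phi_{c'}(\delta^r c+H_{c,r}(u))$. This is given by a series in $p\,\ZZ_p\langle u\rangle$.
\end{enumerate}
So $f_1$ is analytic of level $r$ with integral local series. Iterating yields $F^{(n)}$ with integral coefficients, and $F=\sum_n p^nF^{(n)}$ converges in $\ZZ_p\langle x_0,\dots,x_r\rangle$ and represents $f$.

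With this repair your proof is correct but genuinely different from the Buium--Ralph--Simanca argument the paper summarizes and generalizes. For $n=1$ that argument is Theorem~\ref{thm:gutsofproof} with $K^{1^r}=0$, and it runs as follows:
\begin{itemize}
\item choose centers with $\delta^r(a)=0$ (Lemma~\ref{lem:ODE});
\item posit $F=\sum_k\sum_\beta s_{\beta,k}x_0^{\beta_0}\cdots x_{r-1}^{\beta_{r-1}}x_r^k$ with $0\le\beta_i<p$;
\item solve for the $s_{\beta,k}$ degree by degree in $u$, using that $W$ is invertible mod $p$ (Lemma~\ref{lem:W}) and that the coefficient of $u$ in $\delta^r(a+p^ru)$ is a unit;
\item prove separately, from the bounds in Theorem~\ref{lma1}, that $s_{\beta,k}\to0$.
\end{itemize}
You filter $p$-adically instead of $u$-adically. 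Your Lagrange polynomials encode the same interpolation fact as the invertibility of $W$. The honest inversion $u=K_c(\delta^r a-\delta^r c)$ replaces both the special centers and the higher coefficient estimates, and restrictedness of $F$ comes for free. What the paper's route buys is a normal form for $F$, unique when $n=1$. It also gives a mechanism that survives several Frobenius lifts, where the local variables are the many functions $\phi_\eta(u)$ and there is no single $\delta^r a$ to invert.
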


 However, this theorem fails if one passes from $\ZZ_p$ to $R$, \cite[Thm. 10.1]{RS12}. The failure highlights that arithmetic differential equations are genuinely new objects from this classic perspective lying outside the realm of analytic methods. Perhaps the best interpretation is that they are sometimes a replacement for the lack of these analytic methods in arithmetic geometry. This in part helps explain their many applications. 

\

Some of the application of arithmetic differential equations to Diophantine geometry were recently enhanced by the second author's works with Buium which largely is unlocked by passing to a {\it ramified} extension. Specifically, the rings $R_\pi := R[\pi]$ for $\pi$ a root of an Eisenstein polynomial over $R$ defining a Galois extension support multiple $\pi$-lifts of Frobenius which should be viewed in analogy to rings of functions of several variables. This admitted immediately a wider space of solutions to arithmetic differential equations. The ramified solutions to arithmetic ordinary differential equations was explored \cite{BM20,BM23a}. It also opened a door to a wider theory of arithmetic differential equations, analogous to moving from ordinary to partial differential equations. Specifically, a study fully embracing the arithmetic {\it partial} differential equations was taken on \cite{BM23b}. Systematic adaptations yielded a significant enhancement to the foundations of arithmetic differential geometry, \cite{Bui17}. None of these are discussed in the content here, but the interested reader should consult \cite{BM26}. Problems in this theory naturally suggested the questions that led to our work. 

\

The relationships outlined in \cite{BRS11} have been adapted to the `multivariate' setting, but not in a way that can be applied to the fully arithmetic PDE situation just described. Indeed, in \cite{Bra20} the setting $\ZZ_p^n$ was considered. However, this passage to a multi-variable situation utilizes a different analogy. The topological ring $\ZZ_p$ can be viewed as both the unit $p$-adic ball as well as the $p$-adic valued functions on a point. One multivariate generalization considered moves from $\ZZ_p$ to $\ZZ_p^n$. This is a `spacial' enhancement, it keeps the level of arithmetic the same but adds more inputs, thus lies in the first analogy. Instead, viewing $p$-derivations as `directions' as they are interpreted in \cite{BM26}, one should consider ramified extensions of $\ZZ_p$. Such a consideration was made \cite{Lam23}, however, this did not consider multiple $\pi$-lifts of Frobenius, instead only using one choice of lift. To fully harness the arithmetic PDE setting we want to explore arithmetic differential operators attached to multiple $\pi$-lifts of Frobenius with minimal restriction. To account for this, we also propose a new and deeper notion of analytic function to which these differential operators correspond.

\

We briefly summarize the main results of this article. To prepare, we fix $\pi$ a root of an Eisenstein polynomial over $\ZZ_p$ defining a Galois extension and set $A_\pi := \ZZ_p[\pi]$. Unlike the unramified setting, $A_\pi$ can enjoy {\it multiple} $\pi$-lifts of Frobenius coming from restrictions of Frobenius automorphisms in the absolute Galois group. These always coincide when restricting further to $\ZZ_p$ but can remain distinct over $A_\pi$. Equip $A_\pi$ with $\pi$-lifts of Frobenius $\phi_1,\phi_2,\ldots,\phi_n$, i.e.\ ring homomorphisms for which $\phi_i(a) \equiv a^p \bmod \pi$. 
Associated to each $\pi$-lift of Frobenius is its $\pi$-derivation $\delta_i(a) := (\phi_i(a) - a^p)/\pi$. To keep track of bookkeeping, set $\mathbb M_n$ the non-commutative monoid of words on the set $\{ 1,\ldots,n\}$ under concatenation and for $r \geq 0$, $\mathbb M_n^r$ the set of words of length at most $r$ and $\mathbb M_n^{=r}$ the words of length exactly $r$. Each word $\mu = i_1 \cdots i_s \in \mathbb M_n$ defines a composition $d_\mu := \d_{i_1} \circ \cdots \circ \d_{i_s}$ and a composition $\phi_\mu = \phi_{i_1} \circ \cdots \circ \phi_{i_s}$. 

\

An {\bf arithmetic partial differential operator} of order at most $r$ is a function $f \colon A_\pi \to A_\pi$ which is given by $f = F(\d_\mu (a) \colon \mu \in \mathbb M_n^r)$ where $F \in A_\pi[[x_\mu \colon \mu \in \mathbb M_n^r]]$ is a restricted power series. A function $f \colon A_\pi \to A_\pi$ is {\bf analytic of level $r$} provided for each $a \in A_\pi$ there is a restricted power series $F_a \in A_\pi[[x_\mu \colon \mu \in \mathbb M_n^r]]$ so that $f(a + \pi^r u) = F_a(\phi_\mu(u))$ for all $u \in A_\pi$. 

\

Note the definitions naturally agree, taking $\pi = p$ and $n = 1$, with those of \cite{BRS11}. Passing from $\ZZ_p$ to $A_\pi$ is an `arithmetic' enhancement, it keeps the level of the space the same but now adds more `arithmetic directions' i.e., more functions which honestly depend on multiple arithmetic directions as indicated by the multiple $\pi$-derivations. This notion of analytic produces some interesting phenomena, namely a function of one variable that can depend multivariately on its input and the non-trivial family of $\pi$-lifts of Frobenius on that variable. These analytic functions are genuinely {\it new} objects to $p$-adic analysis that appear only in the ramified setting, they have no unramified counterpart. It is this analogy that is of most utility in the theories introduced in \cite{BM23b,BM26}. 

\begin{thm}(Theorem~\ref{thm:PADOtoAnal} and Theorem~\ref{thm:analtopado}) Fix $\{ \phi_1,\ldots,\phi_n\}$ a collection of $\pi$-lifts of Frobenius on $A_\pi$ and $r \geq 0$. Every arithmetic partial differential operator of order $r$ is analytic of level $r$.  If $\phi_1$ is identity,
then the converse holds, that is, when $f \colon A_\pi \to A_\pi$ is analytic of level $r$, $f$ is an arithmetic partial differential operator of order at most $r$.
\end{thm}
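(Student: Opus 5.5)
For the forward direction, I plan to expand $\d_\mu(a+\pi^r u)$ for every word $\mu$ of length $s\le r$ as a restricted power series in the variables $\phi_\nu(u)$, $|\nu|\le s$, with coefficients in $A_\pi$ depending on $a$. Substituting these into $F$ then gives $F_a$. The key statement, proved by induction on $s=|\mu|$, is
$$\d_\mu(a+\pi^r u)=\d_\mu(a)+\pi^{r-s}\,G_{\mu,a}\bigl(\phi_\nu(u)\colon |\nu|\le s\bigr)$$
with $G_{\mu,a}$ a polynomial (in particular restricted) over $A_\pi$. For the inductive step, write $\mu=i\mu'$ and set $x=\d_{\mu'}(a)$, $y=\pi^{r-s+1}G_{\mu',a}$. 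Then
$$\d_i(x+y)=\d_i(x)+\d_i(y)-\sum_{k=1}^{p-1}\tfrac1\pi\tbinom pk x^{k}y^{p-k}.$$
Here $\pi\mid p$, the terms $\frac{1}{\pi}y^{p-k}$ retain a factor $\pi^{r-s}$ whenever $r-s+1\ge1$, and $\d_i(y)=(\phi_i(y)-y^p)/\pi$ also carries $\pi^{r-s}$. Since $\phi_i$ is a ring homomorphism, $\phi_i(G_{\mu',a}(\phi_\nu(u)))$ is a polynomial in the $\phi_{i\nu}(u)$ with coefficients $\phi_i$ of the old ones, so the variable set stays inside $\mathbb M_n^{s}$. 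Composing $F$ with these polynomial substitutions, where the constant terms $\d_\mu(a)$ lie in $A_\pi$, yields a restricted power series. Convergence holds because substituting elements of $A_\pi$ into restricted series is continuous. This completes the proof of Theorem~\ref{thm:PADOtoAnal}.

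For the converse (Theorem~\ref{thm:analtopado}), with $\phi_1=\id$, I will follow the strategy of \cite{BRS11}. First, fix a finite set of representatives $a$ of $A_\pi/\pi^r$; there are $p^{r}$ of them since the residue field is $\FF_p$. Write $b=a+\pi^r u$. The main step is to show that $u$ and each $\phi_\nu(u)$ for $|\nu|\le r$ can be written as restricted power series in the $\d_\mu(b)$, and that the indicator of the class $b\equiv a \bmod \pi^r$ is also such a series. Then
$$f(b)=\sum_a \mathbf 1_a(b)\,F_a\bigl(\phi_\nu(u_a(b))\bigr)$$
is an arithmetic partial differential operator. Expressing $\phi_\nu(u)$ is easy: $\phi_\mu(b)$ is a polynomial in the $\d_{\mu'}(b)$ with $\mu'$ ranging over suffixes and prefixes of $\mu$. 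The difficulty is that $u=(b-a)/\pi^r$ requires division by $\pi^r$. This is where $\phi_1=\id$ enters. Since $\d_1(x)=(x-x^p)/\pi$, the operator $\d_1$ plays the role that $\d$ plays over $\ZZ_p$ with $\phi=\id$. I intend to mimic the BRS construction of the digits of $b$ through iterates $\d_1^k(b)$, using Teichmüller-type representatives built from the restricted series $\lim x^{p^m}$, which is approximable by polynomials in $\d_1$-iterates. Together with iterates of $\d_1$, this recovers the $\pi$-adic digits $b_0,\dots,b_{r-1}\in\{\text{Teichmüller reps}\}$.

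The main obstacle is exactly this digit extraction: showing that the first $r$ $\pi$-adic digits of $b$ and the quotient $u$ are restricted power series in $\d_1^k(b)$ for $k\le r$. I would first try $r=1$: $b\equiv b_0\bmod\pi$, and $\d_1(b)=(b-b^p)/\pi$ gives $u$ when $b_0$ is chosen with $b_0^p=b_0$. For a general $b$, I would rewrite $b-\omega(b)$ using $\omega(b)=\lim b^{p^m}$ and telescope, $b-b^{p^m}=\sum_j (b^{p^j}-b^{p^{j+1}})$, each summand being $\pi\,\d_1(b)$-expressible. The indicator functions of residue classes then come from polynomials in $\omega(\cdot)$ vanishing off a class, via Lagrange interpolation over $\mu_{p-1}\cup\{0\}$. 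Finally, I would induct on $r$ by applying the $r=1$ case to $u_1=\d_1$-expression and iterating. I would check carefully that the compositions remain within order $r$ and stay restricted.
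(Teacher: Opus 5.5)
Your forward direction is the paper's argument: expand $\delta_\mu(a+\pi^r u)$ as a polynomial in the $\phi_\nu(u)$, $\nu\le\mu$, and substitute into $F$. It is fine.

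The converse has a genuine gap at the step you call easy, namely obtaining $\phi_\nu(u)$ as an operator of order at most $r$. Your digit extraction divides by $\pi^r$ using only $\delta_1,\dots,\delta_1^r$, which gives $u=U(b,\delta_1 b,\dots,\delta_1^r b)$ of order $r$. But then $\phi_\nu(u)=U^{\phi_\nu}\bigl(\phi_\nu(b),\phi_\nu(\delta_1 b),\dots,\phi_\nu(\delta_1^r b)\bigr)$. Already $\phi_i(\delta_1^r b)=(\delta_1^r b)^p+\pi\,\delta_{i1^r}(b)$ has order $r+1$, and in general words of length up to $|\nu|+r$ appear. Writing instead $\phi_\nu(u)=(\phi_\nu(b)-\phi_\nu(a))/\phi_\nu(\pi)^r$ and extracting digits of $\phi_\nu(b)$ with $\delta_1$ has the same defect, since $\delta_1^k(\phi_\nu(b))$ involves $\delta_{1^k\nu'}(b)$. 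In \cite{BRS11} this issue is invisible because $\phi=\id$ and every $\phi_\nu(u)$ equals $u$. Here $F_a$ may genuinely involve $\phi_\nu(u)$ with $\phi_\nu\neq\id$, so your route only yields an operator of order up to $2r$.

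The missing idea is the unit-coefficient structure of Theorem~\ref{lma1}.

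For $|\eta|=r$, your own expansion $\delta_\eta(a+\pi^r u)=\delta_\eta(a)+G_{\eta,a}$ has coefficient $\phi_\eta(\pi^r)/\pi^r$, a unit, on $\phi_\eta(u)$. All other linear coefficients lie in $\pi A_\pi$, and the degree-$d$ ones lie in $\pi^{d-1}A_\pi$.

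Because $\phi_1=\id$, every $\phi_\nu$ with $|\nu|\le r$ equals some $\phi_\eta$ with $|\eta|=r$ (pad with $1$'s). Taking representatives $E\subset\mathbb M^{=r}$, you get a square system $(\delta_\eta(b)-\delta_\eta(a))_{\eta\in E}=M\,(\phi_\eta(u))_{\eta\in E}+(\text{higher order})$, where $M$ is congruent mod $\pi$ to a diagonal matrix of units. Formally inverting it writes each $\phi_\eta(u)$, including $u=\phi_{1^r}(u)$, as a restricted series in order-$r$ derivatives of $b$; the $\pi^{d-1}$ bound keeps the inverse restricted.

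This is precisely what the paper's full-rank matrices $L_a$ accomplish inside the successive approximation modulo $\fm^{k+1}$ in Theorem~\ref{thm:gutsofproof}.

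Your treatment of the locally constant part also stays within order $r$. It uses $b-\omega(b)=\pi\sum_{j\ge 0}\delta_1(b^{p^j})$ for the constant lift $\omega(b)=\lim_m b^{p^m}$, followed by iteration on the digits. The series converges, since the product rule gives $\delta_1(x^{p^j})\in\pi^jA_\pi[x,\delta_1 x]$. This is a legitimate alternative to the paper's Lemmas~\ref{lem:ODE} and~\ref{lem:W} together with the $n=1$ bootstrap. With the inversion step added, your argument would close.

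A minor point: for $r=1$, $\delta_1(b)$ equals $u$ only modulo $\pi$. It is the telescoped series that gives $u$ exactly.
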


While the major beats of the proofs are similar to those in \cite{BRS11}, the level of notational precision necessary to pass from the ordinary differential operator to the partial differential operator setting is substantial. Theorem~\ref{thm:PADOtoAnal}, which shows every arithmetic partial differential operator is analytic, is essentially direct to establish. We also establish an order analysis similar to \cite[Lem. 10]{BRS11}, see Theorem~\ref{lma1}. To prove Theorem~\ref{thm:analtopado}, one constructs the desired power series by inductively computing its coefficients. Here, the setting is more complicated than that in \cite{BRS11}, and in fact uses \cite[Thm. 1]{BRS11} roughly as a base case. Additionally, there is an inherent uniqueness in the main theorem of \cite{BRS11} that does {\it not} persist to the multivariate setting. We give, in Section~\ref{sec:analtoapdo}, a self-contained summary of the proof of the corresponding theorem \cite[Thm. 5]{BRS11} which is the $\pi = p, n = 1$ case. We also discuss the new ideas and nuances needed to accomplish our adaptation. 

\

\noindent {\bf Acknowledgments:} We thank Rajat Mishra for preliminary readings and helpful suggestions and A.\ Buium for helpful discussions on the topic. The third author was supported by an AMS-Simons Research Enhancement Grant for Primarily Undergraduate Institution Faculty.

\
	
\section{Preliminaries} We review the setting of purely arithmetic PDEs utilized in \cite{BM23b,BM26}. Fix $\QQ_p^{\alg}$ an algebraic closure of $\QQ_p$. Additionally set $\QQ_p^{\ur}$ the maximal unramified extension of $\QQ_p$ in $\QQ_p^{\alg}$. Denote by $K$ the metric completion of $\QQ_p^{\ur}$ and $R$ for the ring of integers of $K$. Throughout, we denote by $\Pi$ the set of all elements $\pi \in \QQ_p^{\alg}$ which are roots of Eisenstein polynomials with coefficients in $\ZZ_p$ and for which $\QQ_p(\pi)/\QQ_p$ is Galois. 

\

For any $\pi\in \Pi$ write $A_\pi := \ZZ_p[\pi]$ the ring of integers of $\QQ_p(\pi)$. Let $\Gal(K^{\alg}/\QQ_p)$ be the Galois group of $K^{\alg}/\QQ_p$. An element $\phi \in \Gal(K^{\alg}/\QQ_p)$ is a {\bf Frobenius automorphism} provided it is continuous and induces the $p$-power Frobenius on the residue field of $K^{\alg}$. Clearly for $\pi\in \Pi$ the field $K_{\pi}$ is mapped into itself by every Frobenius automorphism $\phi$. By continuity of $\phi$ we have an induced automorphism $\phi \colon A_\pi \to A_\pi$ inducing the $p$-power Frobenius on $A_\pi/\pi A_\pi$, i.e., a {\bf $\pi$-Frobenius lift} on $A_\pi$. 

\

Associated to each $\pi$-Frobenius lift $\phi$ on $A_\pi$ is a {\bf $\pi$-derivation} $\delta \colon A_\pi \to A_\pi$ given by $\d(x) := \pi^{-1}(\phi(x) - x^p)$. Each $\pi$-derivation is a set map such that $\delta(1) = 0$ and  
\begin{eqnarray*}
 \delta(x+y) & = & \delta(x) + \delta(y) + \frac{ x^p + y^p - (x+y)^p}{\pi} \\
 \delta(xy) & = & x^p \delta (y) + y^p \delta (x) + \pi \delta (x) \delta (y).
\end{eqnarray*} 

\noindent Fix $\pi \in \Pi$. A {\bf partial $\delta$-structure} on $A_\pi$ is a choice $(\delta_1,\ldots,\delta_n)$ where each $\delta_i$ is a $\pi$-derivation of $A_\pi$. 

\

Let ${\mathbb M}_n$ be the free non-commutative monoid with identity generated by the set $\{1,\ldots,n\}$, $$\mathbb M_n:=\{0\}\cup \{i_1\ldots i_s\ |\ s\in \mathbb N,\ i_1,\ldots,i_s\in \{1,\ldots,n\}\};$$ its elements will be referred to as {\bf words}. The {\bf length} $|\mu|$ of a word $\mu:=i_1\ldots i_s$ is defined by $|\mu|=s$. Denote by $0$ the {\bf empty word} and set its length $|0|=0$. Multiplication is given by concatenation $(\mu,\nu)\mapsto \mu\cdot \nu$ and $0$ is the identity element. For words $\mu = i_1\ldots i_s$ and $\nu = j_1 \ldots j_t$ we say $\nu$ is a {\bf subword} of $\mu$ provided for each $1 \leq \ell \leq t$ there is $1 \leq k_\ell \leq s$ so that $j_\ell = i_{k_\ell}$ and $k_1< k_2< \cdots < k_t$. We write $\nu \leq \mu$ to denote that $\nu$ is a subword of $\mu$. 

\

By convention, we take $\mathbf{N}$ to be the set of non-negative integers. For all $r\in \mathbf{N}$, let ${\mathbb M}^r_n$ be the set of all elements in ${\mathbb M}_n$ of length $\leq r$. Set $\mathbb M_n^+:=\mathbb M_n\setminus \{0\}$ and $\mathbb M_n^{r,+}:=\mathbb M_n^r\setminus \{0\}$. The quantity $n$ is fixed throughout the paper, so we drop this from the notation when possible, that is we will write $\mathbb M$, $\mathbb M^r$, etc.\ for $\mathbb M_n$, $\mathbb M_n^r$, etc.  

\

 Fix $\phi_1,\ldots,\phi_n$ a family of Frobenius automorphisms which induce $\pi$-Frobenius lifts on $A_\pi$. Set for each $\delta_i$ the corresponding $\pi$-derivation. This gives $A_\pi$ a partial $\delta_\pi$-structure. For each $\mu = i_1 \ldots i_s \in \mathbb M$ we define 
$$\phi_\mu := \phi_{i_1} \circ \cdots \circ \phi_{i_s} \textrm{ and } \delta_\mu := \delta_{i_1} \circ \cdots \circ \delta_{i_s}.$$ 
In the next definition we make reference to {\bf restricted power series} with $A_\pi$-coefficients in some number of variables. This is always taken to mean an element $F$ of $A_\pi[[\ux]]$, where $\ux$  is a family of variables, for which coefficients of $F$ tend to zero $\pi$-adically. Specifically, this will occur often in variables indexed by words. To this end, if $x$ is a variable, set $\{ x_\mu \colon \mu \in \mathbb M\}$ a set of new variables.  For a set of words $E\subset \mathbb M ^r$, let $\Gamma_{E} \subset A_\pi[[x_\mu\colon \mu \in E]]$ be the set of restricted power series over $A_\pi$ in the variables $x_\mu$ for $\mu\in E$.  Let $\Delta_E:\Gamma_{E}\to \mathrm{Fun}(A_\pi,A_\pi)$ be the ring homomorphism taking $x_\mu$ to $v\mapsto \delta_\mu(v)$ and extending $A_\pi$-linearly.  Similarly, let $\Phi_E:\Gamma_{E}\to \mathrm{Fun}(A_\pi,A_\pi)$ be the ring homomorphism taking $x_\mu$ to $v\mapsto \phi_\mu(v)$ and extending $A_\pi$-linearly.  Thus, to be precise, we have for any $G = \sum_{j\in \NN^{E}} c_j\prod_{\mu\in E}x_\mu^{j(\mu)}\in \Gamma_{E}$,
\[
    \Delta_E(G)(v)  =  \sum_{j\in \NN^{E}} c_j\prod_{\mu\in E}\delta_\mu(v)^{j(\mu)} \quad \text{and}\quad    \Phi_E(G)(v) =  \sum_{j\in \NN^{E}} c_j\prod_{\mu\in E}\phi_\mu(v)^{j(\mu)}
\]
In the case $E = \mathbb M^r$, which is our most common case, we suppress the subscript.  That is, $\Gamma:=\Gamma_{\mathbb M^r}$, $\Phi:=\Gamma_{\mathbb M^r}$, and $\Delta :=\Delta_{\mathbb M^r}$.

\begin{dff}\label{dff1} Fix a partial $\delta_\pi$-structure on $A_\pi$ and $r \geq 0$ coming from $\{\phi_1,\ldots,\phi_n\}$ a set of Frobenius automorphisms. A function $f \colon A_\pi \to A_\pi$ is an {\bf arithmetic partial differential operator of order at most $r$} provided there exists $G\in \Gamma$ such that $f = \Delta(G)$. 

\

\noindent Fix $a \in A_\pi$ and $r \geq 0$. Call a function $f \colon A_\pi \to A_\pi$ {\bf analytic at $a$ of level $r$} provided there exists $F_a \in \Gamma$ such that $f(a + \pi^r u) = \Phi(F_a)(u)$ for all $u\in A_\pi$. We say {\bf $f$ is analytic of level $r$} if $f$ is analytic of level $r$ at all $a \in A_\pi$.
\end{dff}

We can describe the analytic functions using the notation $\Phi$ above.  First we note that if $f\colon A_\pi\to A_\pi$ is analytic of level $r$ at some $a\in A_\pi$, then it is analytic of level $r$ at any $a'$ in the ball $a + \pi^rA_\pi$.  Indeed, for any such $a'$, $a' = a+ \pi^r u'$ for some $a\in \cA$.  Furthermore, since $f$ is analytic of level $r$ at $a$, there exists $F_a\in \Gamma$ such that for all $u\in A_\pi$, $f(a+\pi^r u) = \Phi(F_a)(u)$.  Therefore, for any $u\in A_\pi$, $f(a'+\pi^r u) = f(a+\pi^r(u + u')) = \Phi(F_a)(u + u') = \Phi(F_{a'})(u)$, where $F_{a'}$ is the transformation of $F_a$ under the rule $x_\mu \mapsto x_\mu + \phi_\mu(u')$.  Hence $f$ is analytic of level $r$ at $a'$.

\ 

Therefore, to show that a function $f:A_\pi\to A_\pi$ is analytic of level $r$, it suffices to show that $f$ is analytic of level $r$ at a set of centers of balls of radius $1/\pi^r$ which cover $A_\pi$.  Let $\cA\subset A_\pi$ be a such a collection of centers, that is, any $v\in A_\pi$ can be written uniquely as $v = a+\pi^r u$ for some $a\in \cA$, $u\in A_\pi$.  By the notation $\Phi_E^{\oplus \cA}$ we mean the function
$\Phi_E^{\oplus \cA} \colon \Gamma_E^{\oplus \cA}\to \mathrm{Fun}(A_\pi,A_\pi)$
given by, for all $u\in A_\pi$,
\begin{align*} 
\Phi_E^{\oplus \cA} \colon \Gamma_E^{\oplus \cA} \to & \mathrm{Fun}(A_\pi,A_\pi) \\ 
(F_a)_{a\in \cA}\mapsto & \left(a + \pi^r u \mapsto \Phi_E(F_a)(u)\right).
\end{align*}
Thus, the set of analytic functions of level $r$ is precisely the image of $\Phi^{\oplus \cA}$ and the set of differential operators of order $r$ is precisely the image of $\Delta$.

\

\begin{rmk}\label{rmk:basicob} A few basic observations are in order.
\begin{enumerate}
\item As mentioned in the introduction, when $\pi = p$ and $n = 1$, the unique lift of Frobenius on $\ZZ_p$ is identity, thus there is only one $\d_\pi$-structure on $A_\pi$. In this case, Definition~\ref{dff1} recovers the corresponding definitions in \cite{BRS11}. In this way, the arithmetic differential operators in \cite{BRS11} are arithmetic {\it ordinary} differential operators. 
\item Arithmetic partial differential operators are closed under sums and products. 
\item The level of an analytic function is {\it not} unique. When $f$ is analytic of level $r$ it is also analytic of level $r'$ for any $r' \geq r$. At times it is convenient to say $f$ is analytic of level at least $r$. 
\end{enumerate}
\end{rmk}

\begin{xmp}
For $p$ odd, on $\ZZ_p$, the Legendre symbol gives rise to a differential operator, and thus by \cite{BRS11} an analytic function. 
\end{xmp}

\begin{xmp}
Set $A_\pi$ with two distinct $\pi$-derivations $\d_1$ and $\d_2$ which do not commute. Set $a = 0$. Fix $r \geq 2$. The function $$u \mapsto F_0(u) := \sum_{n \geq 0} \pi^{n}(\phi_1 \phi_2(u) - \phi_2 \phi_1 (u))^n$$ defines an analytic function $A_\pi \to A_\pi$ as follows. The mapping $\pi^r u \mapsto F_0(u)$ defines a function analytic at $0$ of level $r$. One can then extend this to an analytic function on $A_\pi$ by extension by zero to balls disjoint from $\pi^r A_\pi$. 
\end{xmp}

\begin{rmk}\label{rmk:expansion}
We note that $\phi_\mu$ and $\delta_\mu$ for words $\mu$ with $|\mu| \geq 2$ are not related as they are for words with $|\mu| = 1$. It is direct to verify when $\phi_1$ and $\phi_2$ are $\pi$-lifts of Frobenius, that 
$$(\phi_1 \phi_2)(x) \equiv x^{p^2} + \pi \cdot \d_1 (\pi) \cdot (\d_2 (x))^{p^2} \bmod \pi^2.$$ However, quite generally, each $\d_\mu(x)$, with $\mu = i_1 \ldots i_s$ is a polynomial expression in $\phi_{i_1}(x), \ldots, \phi_{i_s}(x)$ and their compositions. 
\end{rmk}

\section{Arithmetic differential operators are analytic} Throughout, we keep a fixed partial $\delta_{\pi}$-structure on $A_\pi$ coming from a set $\{ \phi_1,\ldots,\phi_n\}$ of Frobenius automorphisms. The main result \cite[Thm. 4, 5]{BRS11} establishes that a function $f \colon \ZZ_p \to \ZZ_p$ is an arithmetic differential operation of order at most $r$ if and only if it analytic of level $r$. The by far easier direction of the two is to show any arithmetic differential operation of level $r$ is analytic. In the language here, this calculates $\d_\mu(a+\pi^r u)$ as a power series expression, see \cite[Lem. 10]{BRS11} from which it is easy to ascertain that this power series is $\pi$-adically restricted. 

\

To generalize this direction to the PDE setting, we need the following lemma, which is an enhancement of \cite[Lem. 10]{BRS11}. However, the combinatorics needed to accurately describe $\d_\mu(a + \pi^r u)$ as a power series in the PDE setting are significantly more complicated, so we treat the lemma after developing some helpful notation. 

\

\begin{xmp}\label{xmp:n=2} It is helpful to see the complication in the case $n = 2$. A direct computation shows the shape of $\d_2(\d_1(a + \pi^r u))$. First we have from definition
\begin{eqnarray*}
\d_1( a + \pi^r u) & = & \frac{1}{\pi} \left( \phi_1(a + \pi^r u) - (a + \pi^r u)^p  \right) \\
& = & \d_1(a) + \sum_j c_j u^j + \frac{\phi_1(\pi)^r}{\pi} \phi_1(u). 
\end{eqnarray*} In this expression, each $c_j$ is divisible by $\pi^r$. As $\phi_1(\pi) \in (\pi)$ we may write this as $$\d_1(a+\pi^r u) = \d_1(a) + \pi^{r-1}  \sum_{i+j \geq 1} c(i,j) u^i \phi_1(u)^j, \textrm{ for }c(i,j) \in A_\pi.$$ 
Furthermore, expanding once again, we find an even more unwieldy expression 
{\footnotesize
\begin{eqnarray*}
\d_2\left( \d_1(a) + \pi^{r-1}  \sum c(i,j) u^i \phi_1(u)^j \right) & = & \frac{1}{\pi} \left( \phi_2\left( \d_1(a) + \pi^{r-1}  \sum c(i,j) u^i \phi_1(u)^j \right) - \left( \d_1(a) + \pi^{r-1}  \sum c(i,j) u^i \phi_1(u)^j \right)^p  \right) \\
& = &  \d_2(\d_1(a)) + \pi^{r-2}\sum_{i,j,k,\ell} c(i,j,k,\ell) u^i \phi_1(u)^j \phi_2(u)^k \phi_2(\phi_1(u))^\ell
\end{eqnarray*}} for values $c(i,j,k,\ell) \in A_\pi$. 

\end{xmp}

\

The point that Example~\ref{xmp:n=2} shows is that the expression $\d_\mu(a + \pi^m u)$ is not going to be a power series in $u$ alone, since the functions $\phi_i$ will not always be identity as was the case in \cite{BRS11}. Instead, these are complicated power series in $\phi_\nu(u)$ for all subwords $\nu \leq \mu$ and we want to introduce a more convenient notation to keep track of the terms. 

\

The key to simplifying the notation is to write these as sums over all functions from one set to another. As usual, for sets $A$ and $B$, the set of functions from $A$ to $B$ can be denoted $B^A$. For a function $\beta\in B^A$, if $B$ is a subset of the real numbers, then we set $|\beta|_1:=\sum_{a\in A}|\beta(a)|$ if $\beta(a)$ is nonzero for only finitely many inputs and $\infty$ otherwise.

\

\begin{dff}
For each $\mu \in \mathbb{M}^r$, we define the following. 
\begin{enumerate}
\item  Let $\mathcal{P}_\mu := \{ \nu \in \mathbb{M}^r \mid \nu \leq \mu \}$ be the set of subwords of $\mu$, including of course the empty subword. 
\item Set $j_{\text{const}} \colon \mathcal{P}_\mu \to \mathbf{N}$ the zero function. 
\item For $\nu\in\mathcal{P}_\mu$, the indicator function $j_\nu \colon \mathcal{P}_\mu \to \mathbf{N}$ is the function given by $j_\nu(\sigma)=0\text{ for }\sigma\neq \nu$ and $j_\nu(\nu) = 1$. 
\item Let $$J_\mu := \left\{j \colon \mathcal{P}_\mu\to \mathbf{N}\mid j(\mu)=0\text{ and } |j|_1\leq p^{|\mu|}\right\} \cup \{j_\mu\}$$ and note $j_{\text{const}} \in J_\mu$. 
\end{enumerate}
\end{dff} 

Thus, repeating and expanding the calculation from Example~\ref{xmp:n=2}, we see for each $\mu \in \mathbb{M}^r$ and $a \in A_\pi$ there exist $c_a(j,\mu)\in A_\pi$ such that \[\delta_\mu(a + \pi^r u) = \sum_{j\in J_\mu}c_a(j,\mu)\prod_{\nu\in \mathcal{P}_\mu}\phi_\nu(u)^{j(\nu)}.\] 

 \ 
Immediately, we have a relationship between arithmetic partial differential operators and analytic functions.

\begin{thm}\label{thm:PADOtoAnal} Every arithmetic partial differential operator of order at most $r$ is also analytic of level $r$. Specifically, set $\cA$ a collection of centers of balls of radius $1/\pi^r$ covering $A_\pi$ and $F\in \Gamma$. Set also $\tau_a \colon \Gamma\to \Gamma$ the $A_\pi$-algebra homomorphism induced by $\tau_a(x_\mu) := \sum_{j \in J_\mu} c_a(j,\mu) \prod_{\nu \in \mathcal{P}_\mu} x_\nu^{j(\nu)}$. We have that $\Delta(F) = \Phi^{\oplus \cA}(\tau_a(F))_{a\in \cA}$. 
\end{thm}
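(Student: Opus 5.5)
The plan has two parts. First, make rigorous the expansion stated just before the theorem,
\[\delta_\mu(a + \pi^r u) = \sum_{j\in J_\mu}c_a(j,\mu)\prod_{\nu\in \mathcal{P}_\mu}\phi_\nu(u)^{j(\nu)},\]
with coefficients in $A_\pi$. Then substitute these expansions into $F$ and check convergence.

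For the expansion I will induct on $|\mu|$, proving the following stronger statement for every $s\ge 0$ and every $\mu$ with $|\mu|\le r$. We can write $\delta_\mu(a+\pi^r u)=\delta_\mu(a)+\pi^{r-|\mu|}Q_{a,\mu}(u)$. Here $Q_{a,\mu}$ is a polynomial with $A_\pi$ coefficients in the quantities $\phi_\nu(u)$ for $\nu\in\mathcal P_\mu$. Its monomials of total degree $\le p^{|\mu|}$ involve only $\nu\neq\mu$, except for the single linear term in $\phi_\mu(u)$.

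The case $\mu=0$ is trivial. For the inductive step, write $\mu=i\mu'$ and set $y=\delta_{\mu'}(a)$, $z=\pi^{r-|\mu'|}Q_{a,\mu'}(u)$. Then
\[\delta_i(y+z)=\delta_i(y)+\frac{\phi_i(z)+y^p-(y+z)^p}{\pi}.\]
\begin{itemize}
\item Applying $\phi_i$ to $z$ gives a factor $\phi_i(\pi)^{r-|\mu'|}\in\pi^{r-|\mu'|}A_\pi$. It also turns each $\phi_\nu(u)$ into $\phi_{i\nu}(u)$, and $i\nu$ is again a subword of $\mu$. The leading linear term $\phi_{\mu'}(u)$ becomes $\phi_\mu(u)$.
\item The remaining part, $y^p-(y+z)^p$, is a polynomial in $z$ with no constant term, so it is divisible by $\pi^{r-|\mu'|}$. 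The degree in the $\phi_\nu(u)$ is multiplied by at most $p$, which gives the bound $|j|_1\le p^{|\mu|}$.
\end{itemize}
Dividing by $\pi$ leaves the factor $\pi^{r-|\mu|}$, which is still integral since $|\mu|\le r$. Thus all $c_a(j,\mu)\in A_\pi$, and for fixed $a$ only finitely many are nonzero. Hence $\tau_a(x_\mu)$ is a polynomial in $\Gamma$.

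I must also check that $\nu\le\mu'$ implies $i\nu\le\mu$ and $\nu\le\mu$. That is immediate from the definition of subword.

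Given the expansion, $\tau_a$ is a well-defined $A_\pi$-algebra endomorphism of $\Gamma$. It is given by substituting polynomials for the variables, and I must justify that it preserves restrictedness. The concern is that substituting polynomials with nonzero constant terms $c_a(j_{\text{const}},\mu)=\delta_\mu(a)$ into a restricted series could fail to converge. It does converge: for $F=\sum_k c_k x^k$ with $c_k\to 0$, each $c_k\tau_a(x)^k$ is a polynomial whose coefficients lie in $c_kA_\pi$. So the sum converges $\pi$-adically coefficientwise, and the resulting coefficients still tend to $0$.

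Finally, for $v=a+\pi^r u$ I evaluate $\Phi(\tau_a(F))(u)$. The map $\Phi$, being evaluation at $x_\nu=\phi_\nu(u)$, is continuous for the $\pi$-adic (Gauss norm) topology on $\Gamma$, since $|\phi_\nu(u)|\le 1$. Therefore
\[\Phi(\tau_a(F))(u)=F\big(\Phi(\tau_a(x_\mu))(u)\big)_\mu=F\big(\delta_\mu(a+\pi^r u)\big)_\mu=\Delta(F)(v).\]
This proves $\Delta(F)=\Phi^{\oplus\cA}(\tau_a(F))_{a\in\cA}$, so $\Delta(F)$ is analytic of level $r$.

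I expect the main obstacle to be the bookkeeping in the inductive expansion. The divisibility by $\pi^{r-|\mu|}$ has to be tracked through both the $\phi_i(z)$ term and the binomial term. This is where the hypothesis $|\mu|\le r$ ensures that $\pi^{r-|\mu|}$ is integral.
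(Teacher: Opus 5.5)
Your proposal is correct and follows essentially the same route as the paper. The paper's proof just writes $\delta_\mu(a+\pi^r u)=\Phi(\tau_a(x_\mu))(u)$, citing the expansion asserted before the theorem, and concludes because $\tau_a$ and $\Phi$ are ring homomorphisms; you additionally prove that expansion with integral coefficients by induction on $|\mu|$ (the same induction as in Theorem~\ref{lma1}), and verify that $\tau_a$ preserves restrictedness and that evaluation is continuous, points the paper leaves implicit.
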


\begin{proof}
 Write $\delta_\mu(a+\pi^r u) = \Phi(\tau_a(x_\mu))(u)$.  Therefore, for any $a, u\in A_\pi$, 
\[	\Delta(F)(a+\pi^r u) = \Phi(\tau_a(F))(u).\qedhere\]
\end{proof}

Similar to \cite[Lem. 10]{BRS11} we estimate the $\pi$-adic absolute values of $c(j,\mu)$.

\

\begin{thm}\label{lma1} Fix $r \geq 0$. If $a \in A_{\pi}$, $v = a + \pi^r u$ in the disc $a+ \pi^r A_{\pi}$, and $\mu \in \mathbb{M}^r$, expanding, then $$\delta_{\mu}(v) = \sum_{j \in J_\mu} c_a(j,\mu) \prod_{\nu \in \mathcal{P}_\mu} \phi_\nu(u)^{j(\nu)}$$ with $c_a(j,\mu) \in A_\pi$ satisfying the following:
\begin{enumerate}
\item $|c_a(j_{\textup{const}},\mu)|_\pi \leq 1$, and in particular $c_a(j_\textup{const},\mu) = \delta_\mu(a)$,
\item $|c_a(j_{\nu},\mu)|_\pi \leq \frac{1}{\pi^{r-|\nu|}}$ for all $\nu\in\mathcal{P}_\mu$, with equality when $\nu = \mu$,
\item $|c_a(j,\mu)|_\pi \leq \frac{1}{\pi^{(r-|\mu|+1)|j|_1-1}}$ when $|j|_1\geq 2$,
\end{enumerate}
In particular, if $j\neq j_\textup{const}$, then $c_a(j,\mu)$ is a unit if and only if $j = j_\mu$ and $|\mu|= r$.
\end{thm}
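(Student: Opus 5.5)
We argue by induction on $|\mu|$. Since the bounds are stated with the exponent $r-|\mu|$, we carry a sharper inductive hypothesis. Write $\delta_\mu(a+\pi^r u)=\delta_\mu(a)+\pi^{r-|\mu|}E_\mu(u)$, where $E_\mu$ is a power series in the $\phi_\nu(u)$, $\nu\in\mathcal P_\mu$. We will show that the coefficient of $\prod_\nu\phi_\nu(u)^{j(\nu)}$ in $\pi^{r-|\mu|}E_\mu$ has valuation at least $(r-|\mu|+1)|j|_1-1$ whenever $|j|_1\ge 1$. The one exception is the linear term in $\phi_\mu(u)$, which should be exactly $\pi^{r-|\mu|}$ times a unit.

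The base case $\mu=0$ is immediate: $\delta_0(v)=a+\pi^r u$, so $c(j_{\rm const})=a$ and $c(j_0)=\pi^r$. Every bound holds, because $(r+1)\cdot 1-1=r$.

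For the inductive step, take $\mu=i\mu'$ and set $w=\delta_{\mu'}(v)=\delta_{\mu'}(a)+h$, where $h$ is the nonconstant part, with the bounds assumed for $\mu'$ and $s=|\mu'|$. Apply the sum rule for $\delta_i$:
\[
\delta_i(b+h)=\delta_i(b)+\frac{\phi_i(h)-h^p}{\pi}-\sum_{k=1}^{p-1}\frac{1}{\pi}\binom pk b^{p-k}h^k .
\]
Each term is then handled separately.

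\begin{enumerate}
\item The term $\phi_i(h)/\pi$. Since $\phi_i$ is a ring automorphism of $A_\pi$ preserving $|\cdot|_\pi$, we have $\phi_i(\prod\phi_\nu(u)^{j(\nu)})=\prod\phi_{i\nu}(u)^{j(\nu)}$. Indexes shift from $\mathcal P_{\mu'}$ to $\{i\nu\}\subset\mathcal P_\mu$, and $|j|_1$ is preserved. Coefficients are divided by $\pi$, so each exponent drops by $1$. For a coefficient of valuation at least $(r-s+1)|j|_1-1$ this gives
\[
(r-s+1)|j|_1-2\ \ge\ (r-s)|j|_1-1
\]
when $|j|_1\ge1$, which is what level $s+1$ requires. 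The linear term $\pi^{r-s}\phi_{\mu'}(u)$ times a unit becomes $\phi_i(\pi)^{r-s}\pi^{-1}\phi_\mu(u)$ times a unit. Because $\phi_i(\pi)$ is again a uniformizer, this has valuation exactly $r-s-1=r-|\mu|$, which gives the equality in (2).
\item The term $h^p/\pi$. Every monomial has $|j|_1=p\,m$ with $m\ge1$ the number of factors, and its coefficient has valuation at least $m'(r-s)-1$ for some $m'\ge|j|_1$. This suffices, since $m'(r-s)-1\ge |j|_1(r-s)-1$.
\item The binomial terms. For $1\le k\le p-1$ the factor $\pi^{-1}\binom pk$ is integral, because $p\in\pi A_\pi$. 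A product of $k$ factors with total degree $|j|_1$ then has valuation at least $|j|_1(r-s)-k\cdot 1+\dots$. Grouping so that each factor $\ell$ contributes $(r-s+1)|j_\ell|-1$ gives total at least $(r-s+1)|j|_1-k$. This is at least $(r-s)|j|_1-1$ precisely when $|j|_1\ge k-1$, which always holds since each factor has $|j_\ell|\ge1$.
\end{enumerate}

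Collecting the three cases proves (2) and (3). Restricting to $|j|_1=1$ gives (2), with $\nu\ne\mu$ coming only from lower terms of valuation at least $r-|\nu|$. Part (1) follows because $\delta_\mu(a)$ is the value at $u=0$. For the degree bound $|j|_1\le p^{|\mu|}$ and the condition $j(\mu)=0$ except for $j_\mu$, note that $h^p$ multiplies degrees by at most $p$, and $\phi_\mu(u)$ appears only linearly, through case (1). Convergence of all rearrangements is automatic, since only finitely many monomials occur. The final statement follows from (2) and (3): a unit requires valuation $0$. For $|j|_1\ge2$, the bound in (3) is at least $2(r-|\mu|+1)-1\ge1$ when $|\mu|\le r$. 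For $|j|_1=1$ with $\nu\ne\mu$ we get $r-|\nu|\ge1$. So a unit occurs only for $j=j_\mu$ with $|\mu|=r$.

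The main obstacle is the inequality bookkeeping in case (3), which holds only with a nonnegative slack. Mixed monomials need care, because one monomial can arise from several products, and the valuation of a sum is bounded below by the minimum. I would first verify the case $p=2$ by hand, and fall back on the cruder uniform bound $(r-|\mu|)|j|_1$ plus an extra $1$ where needed.
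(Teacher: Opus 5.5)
Your approach is the paper's. You induct on $|\mu|$, write $\delta_{i\mu'}(v)=\delta_i(b+h)$ with $b=\delta_{\mu'}(a)$, and bound the contributions of $\phi_i(h)/\pi$, of $h^p/\pi$, and of the cross terms. The paper expands $(b+h)^p$ into ordered $p$-fold products rather than splitting off $h^p$, but the estimates are the same.

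\textbf{The gap: your induction does not carry the sharper bound in (2).} Your hypothesis bounds every linear coefficient only by valuation $\ge (r-|\mu|+1)\cdot 1-1=r-|\mu|$. Part (2) demands valuation $\ge r-|\nu|$ for the coefficient of $\phi_\nu(u)$, which is strictly stronger when $\nu$ is a proper subword of $\mu$. The weaker bound does not propagate to (2). In case (1), a linear coefficient of $\delta_{\mu'}(v)$ of valuation $\ge r-s$ only gives valuation $\ge r-s-1$ for the coefficient of $\phi_{i\nu'}(u)$, not $\ge r-|\nu'|-1$. So the sentence ``with $\nu\ne\mu$ coming only from lower terms of valuation at least $r-|\nu|$'' is asserted, not proved. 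It is exactly what your final paragraph needs. When $|\mu|=r$ and $\nu\ne\mu$, your hypothesis only gives valuation $\ge 0$ for $c_a(j_\nu,\mu)$, so it cannot exclude a unit. For instance, with $r=2$ and $\mu=1\cdot 1$, it yields only valuation $\ge 0$ for $c_a(j_{1},1\cdot 1)$.

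\textbf{The fix: put (2) itself into the induction, as the paper does.}
In case (1), the coefficient of $\phi_{i\nu'}(u)$ receives $\pi^{-1}\phi_i(c_a(j_{\nu'},\mu'))$, which has valuation $\ge r-|\nu'|-1=r-|i\nu'|$.
In case (3), only $k=1$ produces linear terms, namely $\frac{p}{\pi}b^{p-1}c_a(j_\nu,\mu')$ up to sign. This keeps valuation $\ge r-|\nu|$.
$h^p$ and the cross terms with $k\ge 2$ have no linear part.
Your estimates for (3) are unaffected, since they only use the weaker linear bound $r-s$.

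\textbf{Two minor points.}
In case (2), the claim $|j|_1=pm$ is false: degrees in $h^p$ are arbitrary integers $\ge p$. The correct reason is that a factor of degree $d\ge 1$ has valuation $\ge (r-s+1)d-1\ge (r-s)d$. Hence the product divided by $\pi$ has valuation $\ge (r-s)|j|_1-1$.
The hedging in your last paragraph is unnecessary. The slack in case (3) is $|j|_1-k+1\ge 1$ for every $p$, so no separate check of $p=2$ is needed.
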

\begin{proof} As $a$ is fixed throughout, we suppress this subscript from the notation. We proceed by induction on $|\mu|$. The base case $|\mu| = 0$ is immediate. Thus, we proceed with the inductive case. Assume that the estimates hold for $\mu$. Fix $1 \leq i \leq n$. We show the estimates hold for $i \cdot \mu$. By definition

\begin{align*}
	\delta_{i\cdot \mu}(a + \pi^ru)
	&= \dfrac{1}{\pi}\left(\phi_i\left(\sum_{j\in J_\mu} c(j,\mu)\prod_{\nu\in P_\mu} \phi_\nu(u)^{j(\nu)}\right) - \left(\sum_{j\in J_\mu} c(j,\mu)\prod_{\nu\in P_\mu} \phi_\nu(u)^{j(\nu)}\right)^p\right)\\
	&=\dfrac{1}{\pi}\left(\phi_i(c(j_{\mathrm{const}},\mu)) + \phi_i(c(j_\mu,\mu))\phi_{i\cdot\mu}(u) + \sum_{j\in J_\mu\colon j\neq j_{\mu,j_{\mathrm{const}}}}\phi_i(c(j,\mu))\prod_{\nu\in P_\mu} \phi_{i\cdot\nu}(u)^{j(\nu)} \right.\\
	&\qquad - \left.\left(c(j_{\mathrm{const}},\mu) + c(j_\mu,\mu)\phi_{\mu}(u) + \sum_{j\in J_\mu\colon j\neq j_{\mu,j_{\mathrm{const}}}}c(j,\mu)\prod_{\nu\in P_\mu} \phi_{\nu}(u)^{j(\nu)}\right)^p\right)
\end{align*}

\

Statement (1) and the $\nu = \mu$ case of statement (2) follow by direct computation as 
\[c(j_{\textup{const}},i \cdot \mu) = \dfrac{\phi_i(c(j_{\mathrm{const}},\mu)) -c(j_{\mathrm{const}},\mu)^p}{\pi} = \d_i(c(j_{\mathrm{const}},\mu)) = \d_i(\d_\mu(a)) = \d_{i \cdot \mu}(a)\] and 
$$c(j_{i \cdot \mu},i \cdot \mu ) = \frac{1}{\pi}\phi_i(c(j_\mu,\mu)) =  \frac{1}{\pi} \phi_i(\phi_\mu(\pi^r)/\pi^{|\mu|})$$ which forces $|c(j_{i \cdot \mu},i \cdot \mu )|_\pi = \left| \frac{1}{\pi^{|\mu|+1}} \phi_{i \cdot \mu}(\pi^r) \right|_\pi = \frac{1}{\pi^{r - |\mu| - 1}} =\frac{1}{\pi^{r - |i \cdot \mu|}}$.
 
\

For the remainder of (2), let $\nu\in\mathcal{P}_{i\cdot \mu}\setminus \{i\cdot \mu\}$.  If $\nu =i\cdot  \nu'$ and $\nu'\in \mathcal{P}_\mu$, then $c(j_\nu,i\cdot \mu)$ has as a summand $\frac{1}{\pi}\phi_i(c(j_{\nu'}, \mu))$.  By the inductive hypothesis, 
\[\left|\frac{1}{\pi}\phi_i(c(j_{\nu'}, \mu))\right|_\pi  = \pi\left|c(j_{\nu'}, \mu)\right|_\pi\leq \frac{1}{\pi^{r-|\nu'| - 1}} = \frac{1}{\pi^{r-|\nu|}}.\]
If $\nu\in \mathcal{P}_\mu$, then $c(j_\nu,i\cdot \mu)$ has as a summand $\frac{p}{\pi}c(j_\textup{const})^{p-1}c(j_\nu,\mu)$, and by the inductive hypothesis,
\[\left|\frac{p}{\pi}c(j_\textup{const})^{p-1}c(j_\nu,\mu)\right|_\pi \leq |c(j_\nu,\mu)|_\pi \leq \frac{1}{\pi^{r-|\nu|}}.\]

Now, for assertion (3), suppose that $j\in J_{i \cdot \mu}$ and $|j|_1\geq 2$. All summands of $c(j,i \cdot \mu)$ have only two forms. One form is \begin{equation*}
\frac{c(j^{(1)},\mu) \cdots c(j^{(p)},\mu)}{\pi}\end{equation*}
for some $j^{(1)},\ldots,j^{(p)} \in J_\mu$ with $\sum_{k=1}^p |j^{(k)}|_1 = |j|_1$. Note,
 we have no control over what functions $j^{(k)}$ can be. Yet, in all cases, using the inductive hypothesis we see $|c(j^{(k)},\mu)|_\pi \leq \frac{1}{\pi^{(r-|i\cdot \mu| + 1)|j^{(k)}|_1}}$ for each $k$. Therefore,  
\[\left|\frac{c(j^{(1)},\mu) \cdots c(j^{(p)},\mu)}{\pi}\right|_\pi =\pi\prod_{k=1}^p|c(j^{(k)},\mu)|_\pi\leq \frac{\pi}{\pi^{\sum_{k=1}^p(r-|i\cdot \mu|+1)|j^{(k)}|_1}} = \frac{1}{\pi^{(r-|i\cdot \mu|+1)|j|_1-1}}.\] The other type of summand only occurs when all of the words in the support of $j$ are of the form $i\cdot \nu$. In this case, $c(j,i\cdot \mu)$ also has summands of the form $\frac{\phi_i(c(\tilde j,\mu))}{\pi}$, where $\tilde j\in J_\mu$ is defined by $\tilde j(\nu) = j(i\cdot \nu)$.  In particular, this means that $|\tilde j|_1 = |j|_1\geq 2$.  Therefore by the inductive hypothesis,
\[\left|\frac{\phi_i(c(\tilde j,\mu))}{\pi}\right|_\pi \leq \pi\left|c(\tilde j,\mu)\right|_\pi \leq \frac{1}{\pi^{(r-|\mu|+1)|\tilde j|_1 - 2}}\leq  \frac{1}{\pi^{(r-|i\cdot \mu| + 1)|j|_1 + |j|_1 -2}}\leq \frac{1}{\pi^{(r-|i\cdot \mu| + 1)|j|_1 - 1}}.\qedhere\]
\end{proof}

\section{Analytic functions are arithmetic partial differential operators}\label{sec:analtoapdo}

We now conclude with the rest of the proof of the main theorem. We also include a review the broad strokes of the proof of \cite[Thm. 5]{BRS11}, which is the $n = 1$ and $\pi = p$ case of our theorem, to help the reader. Our extension requires some extra work and is notationally much more complicated. One difference is paramount enough that it introduces a hypothesis in our case that was automatic in \cite{BRS11}. We do this first and state the theorem. 

\

\begin{thm}\label{thm:analtopado} Fix $\cF = \{ \phi_1,\ldots,\phi_n\}$ with $\phi_1 = \id$ and $r \geq 0$. If $f \colon A_\pi \to A_\pi$ is analytic of level at least $r$ then $f$ is an arithmetic partial differential operator of order at most $r$. 
\end{thm}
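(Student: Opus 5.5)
Since $f$ is analytic of level at least $r$, by Remark~\ref{rmk:basicob} (and the observation after Definition~\ref{dff1}) we may fix exactly level $r$ and a finite set of centers $\cA$ with $f(a+\pi^r u)=\Phi(F_a)(u)$, $F_a\in\Gamma$. Because $\Delta$ is an $A_\pi$-algebra homomorphism and arithmetic partial differential operators are closed under sums and products, it suffices to show two things: that each ``piece'' $v\mapsto \mathbf 1_{a+\pi^rA_\pi}(v)\cdot \Phi(F_a)((v-a)/\pi^r)$ is in the image of $\Delta$, and that we can reduce this to showing that (i) the indicator functions $\mathbf 1_{a+\pi^rA_\pi}$ and (ii) the functions $v\mapsto \phi_\mu\!\left((v-a)/\pi^r\right)$ for $\mu\in\mathbb M^r$ are in the image of $\Delta$, with a uniform integrality bound allowing infinite restricted sums. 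The plan is to invert the triangular expansion of Theorem~\ref{lma1}. There, $\delta_\mu(a+\pi^r u)$ has the leading term $c_a(j_\mu,\mu)\phi_\mu(u)$, and when $|\mu|=r$ this coefficient is a unit. Lower-order terms involve only $\phi_\nu(u)$ for proper subwords $\nu$, with coefficients bounded as in (2),(3).

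For the indicator functions (step (i)), we use $\phi_1=\id$. The key idea is that $\delta_1$ is then the honest $\pi$-derivation $x\mapsto (x-x^p)/\pi$, and words in the letter $1$ alone give $\delta_{1\cdots1}$, which is the map studied in \cite{BRS11} transported to $A_\pi$. Using \cite[Thm.~1]{BRS11}-style arguments as a base case, we expect that characteristic functions of balls of radius $|\pi|^r$ are restricted power series in $v,\delta_1v,\ldots,\delta_1^{r}v$. The argument would go as follows. Iterating $x\mapsto (x-x^p)/\pi$ detects residues: modulo $\pi$, $\delta_1(v)$ determines the next $\pi$-adic digit of $v$ up to Teichmüller corrections. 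A polynomial such as $1-(x-b)^{q-1}$ over the residue field, lifted and then raised to a large $p$-power limit, yields an idempotent indicator as a restricted series. This is exactly where $\phi_1=\id$ is needed. Without it, no single $\pi$-derivation is guaranteed to see the digits of $v$ in a way expressible by restricted series, and uniqueness also breaks.

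For the analytic pieces (step (ii)), we argue by induction on $|\mu|$ that, on the ball $a+\pi^rA_\pi$, $\phi_\mu(u)=\Delta(H_{a,\mu})$ for some restricted $H_{a,\mu}$ in the variables $x_\nu$, $\nu\le\mu$. From Theorem~\ref{lma1}, $c_a(j_\mu,\mu)\phi_\mu(u)=\delta_\mu(v)-\delta_\mu(a)-\sum_{j\ne j_\mu,j_{\rm const}} c_a(j,\mu)\prod\phi_\nu(u)^{j(\nu)}$, and the right side involves only proper subwords, so by induction it lies in the image of $\Delta$. The difficulty is dividing by $c_a(j_\mu,\mu)$, which has valuation $r-|\mu|$. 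To handle it, we instead write $\Phi(F_a)$ after substituting $\phi_\mu(u)$ and rescale. Rather than expressing $\phi_\mu(u)$ itself, we express $\pi^{r-|\mu|}$-multiples, and we absorb the denominators into the coefficients of $F_a$. This requires rewriting $F_a$ via the change of variables $x_\mu\mapsto \pi^{-(r-|\mu|)}(\cdots)$ and checking that the result is still restricted. That check uses estimate (3) of Theorem~\ref{lma1}: the exponent $(r-|\mu|+1)|j|_1-1$ grows linearly in $|j|_1$ and dominates the accumulated denominators.

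The main obstacle is this division step: showing that the inverse of the triangular substitution $\tau_a$ from Theorem~\ref{thm:PADOtoAnal} maps $\Gamma$ into $\Gamma$, i.e.\ preserves restrictedness, rather than only into $K\otimes\Gamma$. If the naive inverse fails, the fallback is to follow \cite{BRS11} more closely. There, the coefficients of the desired $G$ are constructed inductively modulo $\pi^N$ for increasing $N$. At each stage, the error $f-\Delta(G_N)$ is shown to be $\pi^{N}$ times an analytic function of level $r$, and that function is then expressed again, giving convergence $G=\lim G_N$ in the $\pi$-adic topology. Finally, we sum over the finitely many $a\in\cA$ the products of the indicator operators with these pieces to obtain $G\in\Gamma$ with $\Delta(G)=f$.
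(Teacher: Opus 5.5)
There is a genuine gap in step (ii), and it sits exactly where the hypothesis $\phi_1=\id$ has to be used. Your induction on $|\mu|$ already fails at $\mu=0$. For $r\ge 1$, the function $u=(v-a)/\pi^r$ is not $H(v)$ on $a+\pi^rA_\pi$ for any restricted $H$ in $x_0$ alone. Indeed, $H(a+\pi^r u)$ is a restricted series in $u$ whose $u$-coefficient is divisible by $\pi^r$, and by Strassmann's theorem two restricted series agreeing on $A_\pi$ are equal.

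More generally, for $|\mu|<r$ the leading coefficient $c_a(j_\mu,\mu)$ has valuation $r-|\mu|>0$, and your rescaling cannot be absorbed into $F_a$: with $F_a=x_0$ it produces the coefficient $\pi^{-r}$. Estimate (3) of Theorem~\ref{lma1} bounds the forward coefficients and gives nothing against dividing by a non-unit.

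The fallback does not close the gap. To ``express again'' the error you need precisely the missing mod-$\pi$ step. Moreover, at the level of functions, dividing an analytic function that vanishes mod $\pi$ by $\pi$ can raise its level: $v-v^p$ is analytic of level $0$, but $(v-v^p)/\pi=\delta_1(v)$ is not. So any such iteration would have to be run on the series $F_a$, not on functions.

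The missing idea is this. Since $\phi_1=\id$, we have $\phi_\mu=\phi_{1^{r-|\mu|}\mu}$, so every $\phi_\mu(u)$ equals some $\phi_\eta(u)$ with $|\eta|=r$. This is the paper's choice $E\subset\mathbb{M}^{=r}$ together with the reindexing $\cE$. You should invert only in length-$r$ words. For $\eta,\eta'\in E$, the grouped linear coefficients $c'_a(j_{\eta'},\eta)$ are units if and only if $\eta'=\eta$, and by (3) every nonlinear coefficient of $\delta_\eta(a+\pi^r u)$ lies in $\pi A_\pi$.

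The paper exploits this by matching coefficients degree by degree modulo $\fm^{k+1}$, using the unit-diagonal matrices $L_a$ and the full-rank matrix $W$ of Lemma~\ref{lem:W}. In your framework the same observation would let you solve $y=Mz+N(z)$ by successive approximation, where $M$ is unit diagonal mod $\pi$ and $N$ is a polynomial with coefficients in $\pi A_\pi$. This gives each $\phi_\eta(u)$ as a restricted series in the $\delta_{\eta'}(v)-\delta_{\eta'}(a)$, $\eta'\in E$.

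Step (i) is also unproved as written. With $g=1-x_0^{p-1}$, the powers $g^{p^k}$ converge pointwise to an indicator but not in $\Gamma$, since $g^{p^k}$ has a unit coefficient in degree $(p-1)p^k$; so no restricted series results. The paper obtains locally constant functions from the $n=1$ case of Theorem~\ref{thm:gutsofproof}, using the centers of Lemma~\ref{lem:ODE} with $\delta_1^r(a)=0$, so that $K^{1^r}=0$.

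These two points are where $\phi_1=\id$ really enters: the reduction to length-$r$ words and the special centers. It is not needed for ``seeing the digits'', since every $\delta_i$ does that modulo $\pi$, as in Lemma~\ref{lem:W}.
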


Before proceeding with the proof of Theorem~\ref{sec:analtoapdo}, we summarize the proof of \cite[Thm. 5]{BRS11}, in a less detailed way but so that the major steps are outlined. We also indicate the differences needed in our proof before we proceed to the detailed version. In particular, \cite[Thm. 5]{BRS11} proves for an analytic function $f \colon \ZZ_p \to \ZZ_p$ analytic of level $r$, there is a restricted power series $F \in \ZZ_p[[x_0,\ldots,x_r]]$ so that $f(u) = F(u,\d(u),\ldots,\d^r(u))$ for all $u \in \ZZ_p$. Note for us $r$ denotes order, where as this is $m$ in \cite{BRS11}. They explicate $F$ as an expansion $$F(x_0,x_1,\ldots,x_m) = \sum_{n \geq 0} \sum_\beta \cs_{\beta,n} x_0^{\beta_0}\cdots x_{r-1}^{\beta_{r-1}} x_r^n$$ and show that when $F$ is of this form, the coefficients $\cs_{\beta,n}$ are uniquely determined. The sum over $\beta$ is over tuples $\beta = (\beta_0,\ldots,\beta_{r-1})$ with each $0 \leq \beta_i \leq p-1$. In \cite{BRS11}, $\cs_{\beta,k}$ is denoted $a_{\beta,k}$, but this notational change will help resolve confusion in our argument.

\

The first step for \cite[Thm. 5]{BRS11} is to reduce to the case where $f$ is supported on a single $p$-adic ball of radius $1/p^r$ and that $f(p^r u) = u^\ell$. This case is sufficient as any analytic function is an algebraic combination of these. Next, one approximates $F$ by a $u$-adically convergent series $F^k$, $F^k \to F$ as $k \to \infty$ which are constructed inductively on $k$. In each step of the induction, the vector of coefficients $(\cs_{\beta,k})_{\beta}$ are determined by a linear system of the following form 
\begin{equation}\label{eq:infinitywars}
c_k W \cdot (\cs_{\beta,k})_{\beta} + C_{k-1} = d.
\end{equation} We explain the terms in this system. The constant $C_{k-1}$ is a term coming from the $(k-1)$-step of the induction and $c_k$ is a constant that appears as a coefficient in the expression $\d(a + p^r u)$. The matrix $W$ is a $p^r \times p^r$-matrix\footnote{Note the unfortunate typo in \cite{BRS11} on the size is corrected in \cite{Bra20}.} built entirely out of products of iterations of $\d$ at a complete family of residue centers and is independent of $f$. The value $d$ is a product of  Kronecker $\d$-functions constructed to ensure $F^k$ approximates $f$ up to order $u^{k+1}$. Now, to solve this system, two major points are established. First, the value $c_k$ is a $p$-adic unit, which follows from the $p$-adic estimates analogous to Theorem~\ref{lma1}. Second, the matrix $W$ is shown to be invertible. 

\

For the adaptation, the  
$F^k$ we introduce is much more complicated to express, however it is nearly a direct analog of what is used in \cite{BRS11}. Indeed, the series $F^k$ involves a clever choice of centers in a covering family of balls choosen via \cite[Lem. 11]{BRS11}. The most obvious analog of \cite[Lem. 11]{BRS11} is too much to hope for in the multivariate setting. Instead we utilize the assumption that $\phi_1 = \id$ to allow us to adapt the main theorem of \cite{BRS11} as a bootstrap. 
Additionally, $F$ is written in terms of monomials of the form $\phi_\mu(u)^{j(\mu)}$ for various $\mu \in \mathbb{M}^r$. Many of these functions $\phi_\mu$ as $\mu$ ranges coincide. The hypothesis that some $\phi_i = \id$ is helpful towards controlling this, but a complicated reindexing is also needed to find the right coefficients. 

\ 

 The convergence we need now works modulo powers of the `ideal' $\fm := (\phi_\mu(u) \colon \mu \in \mathbb{M}^r)$ in a fashion made precise in the proof. Once setup properly, the critical point we arrive at is a linear system similar to that of \eqref{eq:infinitywars}. Specifically, the new system has the form 
\begin{equation}\label{eq:doomsday}
(C_{k-1,\ell})_\ell + L \cdot W \cdot (\cs_{\beta,\gamma})_\beta = (d_\ell)_\ell. 
\end{equation}
The system \eqref{eq:doomsday} is of course more complicated than \eqref{eq:infinitywars}, which is apparent by its shape. To explicate, here $\gamma$ ranges over functions $\gamma \colon {\mathbb M}^{= r} \to \NN$ with $|\gamma|_1 \leq k$, whereas when $n = 1$, $\# {\mathbb M}^{=r} = 1$ and $\ell$ runs over a similar type of family of functions. This means we have a larger system to consider which is not surprising. The vector $(d_\ell)_\ell$ is chosen to ensure $F^k$ and $f$ agree up modulo $\fm^{k+1}$. The term $(C_{k-1,\ell})_\ell$ is now a vector of values coming from the $(k-1)$-step of the induction. This time, $W$ is {\it not} a square matrix, but instead has $p^r$ rows and a column for every function $\beta \colon \mathbb{M}^{r-1} \to \NN$ with $\beta(\mu) < p$ for all $\mu$. In the case $\pi = p$ and $n = 1$, $\# \mathbb{M}^{r-1} = r$ yielding the square shape of the $W$ matrix in \cite{BRS11}. We show however that this expanded $W$ still has `full rank', see Lemma~\ref{lem:W}. The matrix $L$ arises roughly from the regrouping needed to extract the coefficient of $\phi_\eta$ as $\eta$ runs only over a set of words including each unique function $\phi_\mu$ once. It plays the role of $c_k$ before. This matrix $L$ is also not square, thus it is impossible to show it is invertible. Instead, we also show that the above system can be solved. Note the systematic use of non-square matrices means one cannot expect uniqueness to hold. 

\

Now, to begin working towards the proof of Theorem~\ref{thm:analtopado}, we select a collection of centers of $\pi$-adic balls of radius $1/\pi^r$ with certain properties. We make this selection because generally, we see no analog \cite[Lem. 11]{BRS11} to hold for partial $\d$-structures without imposing conditions relating the associated lifts of Frobenius. 

\begin{lem}\label{lem:ODE} Let $\phi = \id$, $\delta$ the corresponding $\pi$-derivation, and $r\geq 1$.  There exists a set $\cA\subset A_\pi$ with $\#\cA = p^r$ such that $\cA + \pi^rA_\pi = A_\pi$ and, for all $a\in \cA$, $\delta^r(a) = 0$.
\end{lem}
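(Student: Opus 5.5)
The idea is to start from an arbitrary complete set of residues modulo $\pi^r$ and move each residue inside its own ball $a+\pi^rA_\pi$ to a zero of $\delta^r$, using Theorem~\ref{lma1} and Hensel's lemma. Since $A_\pi$ is totally ramified over $\ZZ_p$, its residue field is $\FF_p$, so $A_\pi/\pi^rA_\pi$ has exactly $p^r$ elements. Fix any set $\cA_0$ of $p^r$ representatives. It suffices to show that for each $a\in\cA_0$ there is $u_0\in A_\pi$ with $\delta^r(a+\pi^r u_0)=0$. We then set $\cA=\{a+\pi^r u_0(a)\}$. This set still has one element in each class modulo $\pi^r$, hence $\#\cA=p^r$ and $\cA+\pi^rA_\pi=A_\pi$.

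Fix $a$ and apply Theorem~\ref{lma1} with $n=1$, $\phi_1=\id$ and $\mu=1\cdots1$ of length $r$, so that $\delta_\mu=\delta^r$. Every subword $\nu\le\mu$ satisfies $\phi_\nu=\id$. Hence the expansion collapses to a power series in $u$ alone:
$$g(u):=\delta^r(a+\pi^r u)=\sum_{j\in J_\mu}c_a(j,\mu)\,u^{|j|_1}.$$
This is a polynomial in $u$, because $J_\mu$ is finite. We read off its coefficients from Theorem~\ref{lma1}:
\begin{itemize}
\item the constant term is $\delta^r(a)$;
\item the term $j_\mu$ contributes a unit $\varepsilon\,u$, since $|\mu|=r$;
\item the terms $j_\nu$ with $|\nu|<r$ have $|c_a(j_\nu,\mu)|_\pi\le \pi^{-(r-|\nu|)}$, so they are divisible by $\pi$;
\item the terms with $|j|_1\ge2$ satisfy $|c_a(j,\mu)|_\pi\le \pi^{-(|j|_1-1)}$, since $r-|\mu|+1=1$, so they are also divisible by $\pi$.
\end{itemize}
Therefore $g(u)\equiv \delta^r(a)+\varepsilon' u \pmod{\pi}$ with $\varepsilon'$ a unit, and $g'(u)\equiv\varepsilon'\pmod\pi$ is a unit for every $u\in A_\pi$.

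Now apply Hensel's lemma in the complete DVR $A_\pi$. Take $u_1\equiv-\delta^r(a)\varepsilon'^{-1}\pmod\pi$. Then $|g(u_1)|_\pi<1=|g'(u_1)|_\pi^2$, so Newton iteration converges to a root $u_0$ of $g$ with $u_0\equiv u_1 \pmod\pi$. Alternatively, one can observe directly that $u\mapsto u-\varepsilon'^{-1}g(u)$ is a contraction on $A_\pi$. Either way, $\delta^r(a+\pi^r u_0)=0$, which gives the lemma.

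The main point to check carefully is that the collapse of the multivariate expansion to a single-variable one is legitimate. This holds because $\phi=\id$ makes every $\phi_\nu(u)$ equal to $u$. One must also confirm that the unit coefficient in front of $u$ is not cancelled mod $\pi$ by contributions from other $j$ with $|j|_1=1$. This is exactly what part~(2) of Theorem~\ref{lma1} guarantees, since those $\nu$ have $|\nu|<r$. The case $r=0$ is excluded by hypothesis. For $r\ge1$ the estimate in~(3) also gives divisibility by $\pi$ for all higher-degree terms, since $|j|_1-1\ge1$.
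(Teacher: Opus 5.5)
Your proof is correct, but it takes a different route from the paper's.

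\textbf{The paper's route.} The paper builds $\cA$ from the bottom up as a tree of $\delta$-preimages. $C_1$ is the set of roots of $t^p=t$, and $C_i$ consists of the $p$ Hensel roots of $t^p-t+\pi b$ for each $b\in C_{i-1}$. Thus $\delta(C_i)=C_{i-1}$, and $\delta^i$ vanishes on $C_i$ by construction. The real work is a separate induction showing that $C_r\to A_\pi/\pi^rA_\pi$ is injective, which is what gives $\cA+\pi^rA_\pi=A_\pi$.

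\textbf{Your route.} You fix the residue classes first and solve $\delta^r=0$ inside each ball. Cardinality and covering are then automatic. All the effort goes into showing that $u\mapsto\delta^r(a+\pi^ru)$ is a polynomial congruent to $\delta^r(a)+\varepsilon u$ modulo $\pi$, with $\varepsilon$ a unit.

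You extract this from Theorem~\ref{lma1}. That is legitimate, since $\id$ is a $\pi$-Frobenius lift on $A_\pi$ (the residue field is $\FF_p$) and the proof of that theorem applies verbatim. It is, however, heavier than necessary. For $s\ge 1$ one checks directly that $\delta(b+\pi^s w)-\delta(b)-\pi^{s-1}w\in \pi^sA_\pi[w]$. Iterating $r$ times gives $\delta^r(a+\pi^ru)\in \delta^r(a)+u+\pi A_\pi[u]$, so in fact $\varepsilon\equiv 1$.

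\textbf{What each approach buys.} Your approach proves more than the lemma asks. Writing $g(u)=\delta^r(a+\pi^r u)$, the difference $g(u)-g(v)$ is $u-v$ times a unit, so $g$ is injective. Hence each ball $a+\pi^rA_\pi$ contains exactly one zero of $\delta^r$. The set in Lemma~\ref{lem:ODE} is therefore unique: it is the full zero locus of $\delta^r$ in $A_\pi$, and so it coincides with the paper's $C_r$. The paper's construction, for its part, displays directly the tree structure $\delta(C_i)=C_{i-1}$ with $p$-element fibers, and it does not need the expansion estimates at all.
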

\begin{proof} This is done by inductively constructing sets $C_i$ for $1 \leq i \leq r$ for which $\# C_i = p^i$ and ensuring the property $\d^i(a) = 0$ for all $a \in C_i$. One then sets $\cA := C_r$. Indeed, $C_1 = \{ a \in \ZZ_p \colon a^p = a\}$ is the set of constant\footnote{Normally, these are referred to Teichm\"uller lifts, but owing to the Nazi affiliations of O.\ Teichm\"uller, we have opted to refer to these as constant lifts.} lifts. To construct $C_i$, for each $a \in C_{i-1}$, use Hensel lifting on the polynomial $t^p - t + \pi a \in A_\pi[t]$ to find a set of $p$ roots which forms part of $C_i$. Furthermore, these roots are all distinct modulo $\pi$.  This ensures by induction that $\d^i(a) = 0$ for all $a \in C_i$ and moreover $\# C_i = p^i$ by construction. 

Finally, if $a, a'\in C_i$ are distinct and $a \equiv a' \mod \pi^i$, then $\delta(a)\neq \delta(a')$ since $a$ and $a'$ come from roots of different polynomials in the previous step.  However, we also have that $(a - a^p)/\pi\equiv (a'-(a')^p)/\pi \mod \pi^{i-1}$. So we have $\delta(a)\equiv \delta(a') \mod \pi^{i-1}$, a contradiction to the inductive hypothesis.  Therefore the natural map $C_r \subset A_\pi \to A_\pi/ \pi^r \cong \FF_p^r$ is injective and thus a bijection. 
\end{proof}

\

Continuing towards the proof of Theorem~\ref{thm:analtopado}, we handle the analog of the matrix $W$. We appeal to linear algebra over the ring $A_\pi$. As such, we abuse notation and refer to a matrix as {\bf full rank} provided it has a maximal minor with unit determinant. We warn that this is a stronger condition than being full rank after changing to the fraction field of $A_\pi$. We first prove the needed rank condition analogous to \cite[Lem. 12]{BRS11}.

\begin{lem}\label{lem:W} For fixed $r \geq 1$, set $\mathcal{A}$ as in Lemma~\ref{lem:ODE} and $$\mathfrak{B}:= \left\{ \beta \colon \mathbb{M}^{r-1} \to \NN \textrm{ so that } \beta(\mu) < p \textrm{ for each } \mu \in \mathbb{M}^{r-1}\right \}.$$ The $p^r \times \# \mathfrak{B}$-matrix $W := (w_{a\beta})_{a \in \mathcal{A}, \beta \in \mathfrak{B}}$ where $$w_{a\beta} = \prod_{\mu\in \mathbb{M}^{r-1}}\delta_\mu(a)^{\beta(\mu)}$$ has full rank.
\end{lem}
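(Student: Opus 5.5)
The plan is to exhibit an explicit $p^r\times p^r$ submatrix of $W$ and show its determinant is a unit by reducing modulo $\pi$. Write $1^k$ for the word $1\cdots 1$ of length $k$. Since $\phi_1=\id$, the operator $\delta_{1^k}=\delta_1^k$ is the $k$-th iterate of the $\pi$-derivation attached to the identity, exactly as in Lemma~\ref{lem:ODE}. Let $\mathfrak{B}_1\subset\mathfrak{B}$ be the set of $\beta$ supported on $\{1^0,1^1,\ldots,1^{r-1}\}\subseteq \mathbb M^{r-1}$; it has $p^r$ elements. The corresponding columns give the square matrix
$$W_1=\Bigl(\prod_{k=0}^{r-1}\delta_1^k(a)^{\beta(1^k)}\Bigr)_{a\in\cA,\ \beta\in\mathfrak{B}_1}.$$
This is the analogue of the matrix in \cite[Lem. 12]{BRS11}, now over $A_\pi$. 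It suffices to show $\det W_1\not\equiv 0 \bmod \pi$, since then $W_1$ is a maximal minor with unit determinant.

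\emph{Reduction mod $\pi$.} Since $A_\pi/\pi\cong\FF_p$, reducing $W_1$ gives the matrix evaluating the monomials $x_0^{\beta_0}\cdots x_{r-1}^{\beta_{r-1}}$, with $0\le\beta_k<p$, at the points
$$e(a):=\bigl(\bar a,\overline{\delta_1(a)},\ldots,\overline{\delta_1^{r-1}(a)}\bigr)\in\FF_p^r,\qquad a\in\cA.$$
The reduced monomials form a basis of the $p^r$-dimensional space of functions $\FF_p^r\to\FF_p$: the evaluation matrix on all of $\FF_p^r$ is the $r$-fold tensor power of the $p\times p$ Vandermonde matrix on $\FF_p$, which is invertible. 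So everything reduces to showing that $e\colon\cA\to\FF_p^r$ is injective, hence bijective because $\#\cA=p^r$.

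\emph{Injectivity of $e$ (main step).} Take distinct $a,a'\in\cA$. By Lemma~\ref{lem:ODE} they are distinct modulo $\pi^r$. Let $m<r$ be maximal with $a\equiv a'\bmod \pi^m$, and write $a'=a+\pi^m u$ with $u$ a unit. Apply Theorem~\ref{lma1} at level $m$ to the word $\mu=1^m$; since $\phi_{1^k}=\id$, every $\phi_\nu(u)$ equals $u$. The estimates in Theorem~\ref{lma1} then control the coefficients:
\begin{itemize}
\item the coefficient $c_a(j_{1^m},1^m)$ is a unit, which is the equality case of (2) with $|\mu|=m$;
\item each $c_a(j_\nu,1^m)$ with $\nu$ a proper subword has absolute value at most $\pi^{-(m-|\nu|)}$, so it is divisible by $\pi$;
\item each term with $|j|_1\ge 2$ has absolute value at most $\pi^{-(|j|_1-1)}$, so it is also divisible by $\pi$.
\end{itemize}
Hence $\delta_1^m(a')\equiv \delta_1^m(a)+c\,u^{p^{0}}\cdot(\text{unit})\not\equiv\delta_1^m(a)\bmod\pi$, which gives $e(a)\neq e(a')$.

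The obstacle I anticipate is checking that Theorem~\ref{lma1} applies at level $m$ instead of $r$, which only needs $|\mu|\le m$ and holds here. One must also make sure that collapsing all the $\phi_\nu(u)$ to $u$ does not produce cancellation among unit contributions; this is excluded because only one coefficient is a unit. Having established injectivity, $\det W_1$ is a unit and $W$ has full rank.
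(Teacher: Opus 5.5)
Your proof is correct and is essentially the paper's: you use the same $p^r$ columns supported on the powers of a single letter, reduce modulo $\pi$, and run the Vandermonde/polynomial argument on $\FF_p^r$; the only difference is that you prove the bijectivity of $a\mapsto(\bar a,\overline{\delta(a)},\dots,\overline{\delta^{r-1}(a)})$ on $\cA$ directly from Theorem~\ref{lma1} (injectivity via the single unit coefficient), whereas the paper cites an adaptation of Buium's Lemma~3.20 for the surjectivity of $\overline{\nabla}^r_i$ on $A_\pi/\pi^r A_\pi$. One small point is that the lemma does not assume $\phi_1=\id$, but your argument does not need it: for any letter $i$, the surviving term modulo $\pi$ is $c\,\phi_i^m(u)$, which is still a unit.
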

\begin{proof}
\noindent Note the elements of $\cA$ are centers of $p^r$ balls of radius $1/\pi^r$ which cover $A_\pi$. We exhibit a selection of columns for which the $p^r \times p^r$-minor of $W$ has unit determinant. Fix $i \in \{ 1,\ldots,n\}$ and let $B$ be the collection of maps $\beta\in \mathfrak{B}$ with support in the words $i^j := \underbrace{ii\cdots i}_{j\textrm{ times}}$ for $0 \leq j \leq r-1$.
We have $\# B=p^r$ and so the submatrix of $W$ with columns corresponding to the maps in $B$ is a square matrix. Consider the matrix $V=(v_{a\beta})_{\beta \in B}$ with entries in $\FF_p$ given by $v_{a\beta} = \overline{w_{a\beta}}$, where $\overline{w_{a\beta}}$ is the image of $w_{a\beta}$ under the projection map $A_\pi\to A_\pi/\pi A_\pi\cong \FF_p$. We utilize a variant of the map $\nabla$ defined in \cite[Eq. 3.8, Sec. 3.3, pg. 78]{Bui05}. Specifically, for each $1 \leq i \leq n$, set $\nabla_i^r \colon A_\pi \to A_\pi^r$ the map sending $v \mapsto (v, \d_i(v), \ldots, \d_i^{r-1}(v))$. By composing with reduction modulo $\pi$ we consider $\overline{\nabla}^r_i \colon A_\pi/\pi^rA_\pi \to \FF_p^r$. A direct adaptation of \cite[Lem. 3.20]{Bui05} shows $\overline{\nabla}^r_i$ is a surjection for each $i$. So for each $\mathbf{v} = (v_0,v_1,\ldots, v_{r-1})\in \FF_p^r$, there exists $a \in \mathcal{A}$ such that
$$ \mathbf{v} = \overline{\nabla}^r_i(a) = (\overline{a}, \overline{\delta_i(a)}, \cdots, \overline{\delta_i^{r-1}(a)}).$$
Therefore, for any $\beta\in B$, 
$$\mathbf{v}^\beta := v_0^{\beta(i^0)}v_1^{\beta(i^1)}\cdots v_{r-1}^{\beta(i^{r-1})} = \overline{a}^{\beta(i^0)} \overline{\delta_i(a)}^{\beta(i^1)} \cdots \overline{\delta_i^{r-1}(a)}^{\beta(i^{r-1})} = \overline{w_{a\beta}}.
$$

Thus, for every $\mathbf v\in \FF_p^r$, the row vector $(\mathbf{v}^\beta)_{\beta\in B}$ is a row of $V$.  Let $(b_\beta)_{\beta\in B} \in \FF_p^{\# B}$ be a vector in the nullspace of $V$, i.e.\ for all 
$\textbf{v}\in \FF_p^r$, $\sum_{\beta\in B}\textbf{v}^\beta b_\beta = 0$. An easy argument then shows $b_\beta = 0$ for all $\beta \in B$.  Indeed, we have a polynomial $\sum_{\beta \in B} b_\beta X^\beta$ in $r$ variables, each variable of degree at most $p - 1$, but which has $p^r$ roots. The only such polynomial is the zero polynomial.
Therefore $\det(V)\neq 0$, and so the determinant of the submatrix of $W$ with columns from $B$ is a unit.  Thus the claim holds.
\end{proof}

For $\mu,\nu\in \mathbb M^r$, let $\mu\sim \nu$ if $\phi_\mu = \phi_\nu$. For example, when $\phi_1 = \id$, every word is equivalent to a word of length $r$. This condition plays a role in Theorem~\ref{thm:gutsofproof}. Let $E=\{\eta_1,\ldots, \eta_t\}\subset \mathbb M^{r}$ be a complete set of representatives of the equivalence classes of $\sim$; that is, every $\mu\in \mathbb M^r$ is equivalent to exactly one of the words in $E$.  We have that $\Gamma_E = A_\pi[[x_{\eta}\colon \eta\in E]]\cap \Gamma$ is the set of restricted power series in the $x_\eta$ for $\eta\in E$, so let $\cE:\Gamma\to \Gamma_E$ be the surjective $A_\pi$ algebra homomorphism induced by $\cE(x_\mu) := x_{\eta}$, where $\eta$ is the unique element of $E$ such that $\mu \sim \eta$.  Note that $\Phi_E\circ \cE = \Phi$.  

\ 

The following diagram illustrates the relationships between the various sets involved. Recall, $\tau_a \colon \Gamma\to \Gamma$ is the $A_\pi$-algebra homomorphism induced by $\tau_a(x_\mu) := \sum_{j \in J_\mu} c_a(j,\mu) \prod_{\nu \in \mathcal{P}_\mu} x_\nu^{j(\nu)}$. The arithmetic partial differential operators are those functions in the image of $\Delta$, and the analytic functions are the functions in the image of $\Phi^{\oplus \cA}$. The right triangle obviously commutes and the left triangle commutes by Theorem~\ref{thm:PADOtoAnal}. Clearly the image of $\Phi^{\oplus \cA}$ and $\Phi_E^{\oplus \cA}$ agree. The content of the main theorem is that the images of $\Delta$ and $\Phi^{\oplus \cA}$ are equal, thus it suffices to show the composition $\cN$ along the horizontal maps is surjective.

\begin{center}
\begin{tikzpicture}
	\node (Gamma) at (0,0) {$\Gamma$};
	\node (Fun) at (3,-2) {$\mathrm{Fun}(A_\pi,A_\pi)$};
	\node (GammaA) at (3,0) {$\Gamma^{\oplus \cA}$};
	\node (GammaAE) at (6,0) {$\Gamma_E^{\oplus \cA}$};
	\draw[->] (Gamma) -- (Fun) node[midway, below left] {$\Delta$};
	\draw[->] (GammaA) -- (Fun) node[midway, right] {$\Phi^{\oplus \cA}$};
	\draw[->] (Gamma) -- (GammaA) node[midway, below] {$\bigoplus_{a\in \cA}\tau_a$};
	\draw[->] (GammaA) -- (GammaAE) node[midway, below] {$\cE^{\oplus \cA}$};
	\draw[->] (GammaAE) -- (Fun) node[midway, below right] {$\Phi_E^{\oplus \cA}$};
	\draw[->] (Gamma) to[bend left] node[above, midway]{$\mathcal{N}$} (GammaAE) ;
\end{tikzpicture}
\end{center}

As we have no general analog of \cite[Lem. 11]{BRS11}, our construction suffers from one more issue, namely we cannot guarantee $\d_\mu(a) = 0$ for a collection of centers of a covering family of balls of radius $1/\pi^r$. Instead, we articulate a sufficient condition to ensure analytic functions are arithmetic partial differential operators. After this, we will utilize this condition first to show locally constant functions are  arithmetic partial differential operators and then deduce the general case from this. We denote by $\cN := \cE^{\oplus \cA} \circ \bigoplus_{a \in \cA} \tau_a$.

\begin{thm} \label{thm:gutsofproof} Suppose $(\delta_1,\ldots, \delta_n)$ is a partial $\delta$-structure, $r\geq 0$, and that every word in $\mathbb M^r$ is equivalent to a word of length $r$.  If for each $\mu\in \mathbb M^r$ there exists $K^\mu\in \Gamma$ such that $\cN(K^\mu) = (\delta_\mu(a))_{a\in \cA}$, then the map $\cN: \Gamma \to \Gamma_E^{\oplus \cA}$ is surjective.
\end{thm}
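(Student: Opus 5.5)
The plan is to show that the image $S:=\cN(\Gamma)\subseteq \Gamma_E^{\oplus\cA}$ is a $\pi$-adically closed $A_\pi$-subalgebra that contains, for each $a\in\cA$ and each $\eta\in E$, an element which is a unit multiple of $x_\eta$ modulo $\pi$ in the $a$-component and zero elsewhere. Surjectivity then follows by successive approximation.

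\emph{Algebraic and topological set-up.} First I record that $\cN=\cE^{\oplus\cA}\circ\bigoplus_a\tau_a$ is an $A_\pi$-algebra homomorphism, because each $\tau_a$ and $\cE$ is one. It is also continuous for the $\pi$-adic topology, since these maps send $\pi^m\Gamma$ into $\pi^m\Gamma^{\oplus\cA}_E$. Because $\Gamma$ is $\pi$-adically complete, $S$ is closed under $\pi$-adically convergent sums. Next, $S$ contains the idempotents $e_a$, which are $1$ in the $a$-component and $0$ elsewhere. The hypothesis gives $\cN(\prod_\mu (K^\mu)^{\beta(\mu)})=(w_{a\beta})_{a\in\cA}$ for each $\beta\in\mathfrak B$. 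By Lemma~\ref{lem:W}, some $p^r\times p^r$ minor of $W$ has unit determinant, so its inverse has entries in $A_\pi$. Taking $A_\pi$-linear combinations of the corresponding columns therefore produces every standard basis vector of $A_\pi^{\cA}$, that is, every $e_a$. (When $r=0$ we have $\#\cA=1$ and $e_a=1$ trivially.)

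\emph{Producing the variables.} Fix $\eta\in E$. By hypothesis we may take $\eta=\mu$ with $|\mu|=r$. By Theorem~\ref{lma1}, the $a$-component of $\cN(x_\mu-K^\mu)$ equals
\[
\cE\bigl(\tau_a(x_\mu)\bigr)-\delta_\mu(a)=c_a(j_\mu,\mu)\,x_\eta+\textstyle\sum_{j\neq j_{\rm const},j_\mu}c_a(j,\mu)\,\cE\bigl(\prod_\nu x_\nu^{j(\nu)}\bigr).
\]
Since $|\mu|=r$, the coefficient $c_a(j_\mu,\mu)$ is a unit. Every other coefficient with $j\neq j_{\rm const}$ is divisible by $\pi$: for $j=j_\nu$ with $\nu\neq\mu$ we have $|\nu|<r$, and for $|j|_1\ge2$ the bound in (3) gives valuation at least $|j|_1-1\ge1$. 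Hence $y_{a,\eta}:=c_a(j_\mu,\mu)^{-1}\,\cN(e'_a)\,\cN(x_\mu-K^\mu)$, where $e'_a$ is a preimage of $e_a$, lies in $S$ and satisfies
\[
y_{a,\eta}\equiv x_\eta\, e_a \pmod{\pi\,\Gamma_E^{\oplus\cA}}.
\]

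\emph{Approximation.} Let $T=(T_a)_a\in\Gamma_E^{\oplus\cA}$ be arbitrary. Each $T_a$ is a restricted power series in the $x_\eta$, so substituting $x_\eta\mapsto y_{a,\eta}$ converges $\pi$-adically in $\Gamma_E^{\oplus\cA}$, because the coefficients tend to $0$. Moreover this substitution can be carried out inside $\Gamma$ on preimages, which gives $P_0\in\Gamma$. Then $\cN(P_0)=\sum_a e_a\,T_a(y_{a,\cdot})\equiv T \pmod \pi$, so $T-\cN(P_0)=\pi T^{(1)}$. Iterating gives $P_m$ with $T=\cN\bigl(\sum_m\pi^mP_m\bigr)$, and this sum converges in $\Gamma$.

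The main obstacle I expect is the bookkeeping in the middle step. One must check that after applying $\cE$, no non-leading term of $\tau_a(x_\mu)$ collapses onto $x_\eta$ with a unit coefficient, and that every linear term comes from a shorter word and so carries a factor of $\pi$. This is exactly where the hypothesis that each class has a length-$r$ representative, together with the unit statement at the end of Theorem~\ref{lma1}, is used. A secondary technical point is to confirm that substituting elements of $\Gamma$ (which may have nonzero constant terms) into a restricted series stays inside $\Gamma$. This should follow from $\pi$-adic completeness and the decay of the coefficients.
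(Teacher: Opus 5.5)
Your proof is correct, and it takes a genuinely different route from the paper.

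\textbf{The paper's route.} The paper constructs an explicit preimage $F=\sum_{\gamma,\beta}s_{\beta,\gamma}\prod_{|\mu|<r}x_\mu^{\beta(\mu)}\prod_{|\mu|=r}(x_\mu-K^\mu)^{\gamma(\mu)}$ by an $\fm$-adic induction. It matches the coefficients of each $F_a$ degree by degree in the $x_\eta$, solving at each degree linear systems through the matrices $L_a$ and $W$. It then needs a separate $\pi$-adic estimate, using the finer bounds of Theorem~\ref{lma1}, to show that $F$ is restricted. In that argument the $K^\mu$ serve only to make $x_\mu-K^\mu$ map into $\fm$.

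\textbf{Your route.} You show that $\cN$ is surjective modulo $\pi$, using the $e_a$ and the $y_{a,\eta}$, and conclude by $\pi$-adic completeness. This is shorter, makes restrictedness automatic, and uses Theorem~\ref{lma1} only modulo $\pi$. What the paper's route buys in exchange is a normal form for the preimage in the style of Buium--Ralph--Simanca, with explicit bounds on its coefficients.

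\textbf{One difference in hypotheses.} You use $K^\mu$ with $|\mu|\le r-1$ to build exact idempotents. The statement as written permits this. However, the paper's proof only uses $K^\mu$ with $|\mu|=r$, and this matters in the proof of Theorem~\ref{thm:analtopado}, where the theorem is applied for $n=1$ with only $K^{1^r}=0$ available.

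Within your framework this is easy to repair, and in fact no $K^\mu$ is needed at all. By Theorem~\ref{lma1}, $\cN(x_\mu)\equiv(\delta_\mu(a))_{a}\pmod{\pi}$ whenever $|\mu|\le r-1$. So the same $W$-combinations of the monomials $\prod_\mu x_\mu^{\beta(\mu)}$ give elements $\tilde e_a\in\cN(\Gamma)$ with $\tilde e_a\equiv e_a\pmod{\pi}$. Moreover, for $|\mu|=r$ we get $\tilde e_a\bigl(\cN(x_\mu)-\delta_\mu(a)\bigr)\equiv c_a(j_\mu,\mu)\,x_\eta e_a\pmod{\pi}$. Since your approximation step only uses congruences modulo $\pi$, this already gives surjectivity. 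That would make the $n=1$ bootstrap in the proof of Theorem~\ref{thm:analtopado} unnecessary.
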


\begin{proof}

	 We first establish some notation. In addition to retaining the notation for $\mathcal{A}$ and $\mathfrak{B}$ in Lemma~\ref{lem:W}, we define the following sets
$$\NN_{k}^{\mathbb{M}^{=r}} := \left\{ \gamma \colon \mathbb{M}^{=r} \to \NN \textrm{ so that } |\gamma|_1 \leq k\right \} \textrm{ and } 
\NN_{=k}^{\mathbb{M}^{=r}} := \left\{ \gamma \colon \mathbb{M}^{=r} \to \NN \textrm{ so that } |\gamma|_1 = k\right \}.$$
By hypothesis we may also assume that $E\subset \mathbb{M}^{=r}$.

\ 

Let $(F_a)_{a\in\cA}\in \Gamma_E^{\oplus \cA}$.  We will construct $F\in \Gamma$ such that $\cN(F) = (F_a)_{a\in\cA}$ by inductively defining $s_{\beta,\gamma}$ for $\beta\in \mathfrak{B}$, $\gamma\in \NN_{k}^{\mathbb{M}^{=r}}$ so that
\[F := \sum_{\gamma\in \NN^{\mathbb{M}^{=r}}}\sum_{\beta \in \mathfrak{B}} \cs_{\beta,\gamma} \prod_{\mu \in \mathbb{M}^{r-1}} x_\mu^{\beta(\mu)}\prod_{\mu \in \mathbb{M}^{=r}} (x_\mu - K^\mu)^{\gamma(\mu)},\]
where $K^\mu\in \Gamma$ such that $\cN(K^\mu) = (\delta_\mu(a))_{a\in \cA}$.
For $k\in \NN$ we define $F^k\in \Gamma$ to be
\[F^k := \sum_{\gamma\in \NN_k^{\mathbb{M}^{=r}}}\sum_{\beta \in \mathfrak{B}} \cs_{\beta,\gamma} \prod_{\mu \in \mathbb{M}^{r-1}} x_\mu^{\beta(\mu)}\prod_{\mu \in \mathbb{M}^{=r}} (x_\mu - K^\mu)^{\gamma(\mu)}\]
 so that $F^k\to F$ as $k\to \infty$. 

\ 

Set $\fm := (x_\eta \colon \eta \in E)\subset \Gamma_E$, and we inductively construct $s_{\beta,\gamma}$ such that for all $k\in\NN$ and $a\in\cA$, $(\cE\circ \tau_a)(F^k) \equiv F_a \mod \fm^{k+1}$, that is, the coefficients of all terms of degree $k$ or less in $(\cE\circ \tau_a)(F^k)$ and $F_a$ agree.  

\ 

We first handle the base case of the induction $k=0$. We want, for all $a\in\cA$, 
$$(\cE\circ\tau_a)(F^0) \equiv F_a\mod \fm$$
Now modulo $\fm$, and since $\cE$ preserves degree and is identity on $A_\pi$,
\begin{align*}
	(\cE\circ\tau_a)(F^0) & = \sum_{\beta \in \mathfrak{B}} \cs_{\beta,0} \prod_{\mu \in \mathbb{M}^{r-1}} (\cE\circ \tau_a)(x_\mu)^{\beta(\mu)}\\ 
	& = \sum_{\beta \in \mathfrak{B}} \cs_{\beta,0} \prod_{\mu \in \mathbb{M}^{r-1}} \cE\left(\sum_{j\in J_\mu} c_a(j,\mu)\prod_{\nu\in\mathcal{P}_\mu}x_{\nu}^{j(\nu)}\right)^{\beta(\mu)}\\ 
	&\equiv \sum_{\beta \in \mathfrak{B}} \cs_{\beta,0} \prod_{\mu \in \mathbb{M}^{r-1}}c_a(j_\textup{const},\mu)^{\beta(\mu)} \mod \fm \\ 
	&\equiv \sum_{\beta \in \mathfrak{B}} \cs_{\beta,0} \prod_{\mu \in \mathbb{M}^{r-1}} \delta_\mu(a)^{\beta(\mu)}\mod \fm \\
	&\equiv \sum_{\beta \in \mathfrak{B}} \cs_{\beta,0}w_{a,\beta} \mod \fm
\end{align*}

Thus, we seek coefficients $\cs_{\beta,0}$, $\beta\in \mathfrak{B}$, such that for all $a \in \mathcal{A}$, $\sum_{\beta \in \mathfrak{B}} \cs_{\beta,0} w_{a \beta} = d_a$, where $d_a$ is the constant term of $F_a$.  This is equivalent to the matrix equation 
\begin{equation*}
W(\cs_{\beta,0})_{\beta \in \mathfrak{B}} = (d_a)_{a\in \mathcal{A}}.
\end{equation*}
Since $W$ is of full rank by Lemma~\ref{lem:W}, such a vector $(\cs_{\beta,0})$ exists, and the base case is proved.

\

Now we handle the inductive step. Suppose that $k\geq 1$ and that for all $a\in\cA$, $(\cE\circ \tau_a)(F^{k-1}) \equiv F_a \mod \fm^k$.  We wish to construct $\cs_{\beta,\gamma}$ for $\beta\in\mathfrak{B}$ and $\gamma\in \NN_{=k}^{\mathbb{M}^{=r}}$ so that similar equivalences hold modulo $\fm^{k+1}$. 

\

For $a\in \cA$, $\beta\in\mathfrak{B}$, and $\gamma\in \NN_k^{\mathbb{M}^{=r}}$, set 
\[G_{a,\beta} := \prod_{\mu \in \mathbb{M}^{r-1}} \left((\cE\circ \tau_a)(x_\mu)\right)^{\beta(\mu)}\quad \text{and}\quad H_{a,\gamma} := \prod_{\mu \in \mathbb{M}^{=r}}\left((\cE\circ \tau_a)(x_\mu - K^\mu)\right)^{\gamma(\mu)},\] so that
\begin{align*}(\cE\circ \tau_a)(F^k) &= \sum_{\gamma\in \NN_{k}^{\mathbb{M}^{=r}}}\sum_{\beta \in \mathfrak{B}} \cs_{\beta,\gamma}G_{a,\beta}H_{a,\gamma}\\
& = \sum_{\gamma\in \NN_{<k}^{\mathbb{M}^{=r}}}\sum_{\beta \in \mathfrak{B}} \cs_{\beta,\gamma}G_{a,\beta}H_{a,\gamma} + \sum_{\gamma\in \NN_{=k}^{\mathbb{M}^{=r}}}\sum_{\beta \in \mathfrak{B}} \cs_{\beta,\gamma}G_{a,\beta}H_{a,\gamma} \\
&= (\cE\circ \tau_a)(F^{k-1}) + \sum_{\gamma\in \NN_{=k}^{\mathbb{M}^{=r}}}\sum_{\beta \in \mathfrak{B}} \cs_{\beta,\gamma}G_{a,\beta}H_{a,\gamma}.
\end{align*}
For any $\mu$, the constant term of $(\cE\circ \tau_a)(x_\mu - K_\mu)$ is $c_a(j_\textup{const},\mu) - (\cE\circ \tau_a)(K^\mu) = \delta_\mu(a)- \delta_\mu(a) = 0$.  Therefore $H_{a,\gamma}\in \fm^k$ for all $\gamma\in \NN_{=k}^{\mathbb{M}^{=r}}$. 

\ 

Furthermore, the degree $k$ component of $H_{a,\gamma}$ is the product of the degree 1 components of the factors \newline $(\cE\circ \tau_a)(x_\mu - K_\mu)$.  Since $(\cE\circ \tau_a)(K_\mu)$ is constant, we need only consider the degree 1 components of  $(\cE\circ \tau_a)(x_\mu)$.  For $a\in\cA$, $j\in \NN^E$, let $c_a'(j,\mu)\in A_\pi$ such that
\[(\cE\circ \tau_a)(x_\mu) = \sum_{j\in \NN^E} c_a'(j,\mu) \prod_{\eta\in E}x_\eta^{j(\eta)}.\]
The degree 1 terms are precisely those corresponding to the indicator functions $j_\eta$ for $\eta\in E$.  Therefore, 
\[(\cE\circ \tau_a)(x_\mu - K_\mu)\equiv \sum_{\eta\in E}c_a'(j_\eta,\mu)x_\eta \mod \fm^2.\]
Thus, we have that, when $|\gamma|_1 = k$,
\[H_{a,\gamma} \equiv \prod_{\mu\in \mathbb{M}^{=r}}\left(\sum_{\eta\in E}c_a'(j_\eta,\mu)x_\eta\right)^{\gamma(\mu)} \mod \fm^{k+1}.\]

\ 

This implies that modulo $\fm^{k+1}$, $G_{a,\beta}H_{a,\gamma}$ is congruent to the constant term of $G_{a,\beta}$ times $H_{a,\gamma}$.  We have that $(\cE\circ \tau_a)(x_\mu) \equiv c_a(j_\textup{const},\mu) \equiv \delta_\mu(a) \mod \fm$, and therefore $G_{a,\beta}\equiv \prod_{\mu \in \mathbb{M}^{r-1}} \delta_\mu(a)^{\beta(\mu)}\equiv w_{a,\beta}\mod \fm$.  Hence
\begin{align*}
	(\cE\circ\tau_a)(F^k) \equiv (\cE\circ \tau_a)(F^{k-1}) + \sum_{\gamma\in \NN_{=k}^{\mathbb{M}^{=r}}}\sum_{\beta \in \mathfrak{B}} \cs_{\beta,\gamma}w_{a,\beta}\prod_{\mu\in \mathbb{M}^{=r}}\left(\sum_{\eta\in E}c_a'(j_\eta,\mu)x_\eta\right)^{\gamma(\mu)} \mod \fm^{k+1}
\end{align*}

Now since all terms in the second summand on the right above have degree $k$, the only terms of degree less than $k$ come from $(\cE\circ \tau_a)(F^{k-1})$, which by the inductive hypothesis are congruent to $F_a$ modulo $\fm^k$ for all $a\in \cA$.  Thus we need only ensure that the coefficients of the degree $k$ monomials in $(\cE\circ \tau_a)(F^k)$ and $F_a$ are equal.

\ 

For each $a\in \cA$, $\ell\in \NN^E$ with $|\ell|_1=k$, and $\gamma\in \NN_{=k}^{\mathbb M^r}$, let $L_{a,\ell,\gamma}$ be the coefficient in multidegree $\ell$ in $\prod_{\mu\in \mathbb{M}^{=r}}\left(\sum_{\eta\in E}c_a'(j_\eta,\mu)x_\eta\right)^{\gamma(\mu)}$, i.e.\ the coefficient of $\prod_{\eta\in E}x_\eta^{\ell(\eta)}$.  Let $C_{a,\ell}^{k-1}$ be the coefficient in multidegree $\ell$  in $(\cE\circ \tau_a)(F^{k-1})$ and $d_{a,\ell}$ the coefficient in multidegree $\ell$ of $F_a$.  Thus, we wish to find $s_{\beta,\gamma}$ such that for all $a$ and $\ell$,
\[C_{a,\ell}^{k-1} +  \sum_{\gamma\in \NN_{=k}^{\mathbb{M}^{=r}}}\sum_{\beta \in \mathfrak{B}} \cs_{\beta,\gamma}w_{a,\beta}L_{a,\ell,\gamma} = d_{a,\ell}.
\]
Let $S_{a,\gamma} = \sum_{\beta \in \mathfrak{B}} \cs_{\beta,\gamma}w_{a,\beta}$, so that the system of equations above may be rewritten
\[\text{for all }a\in \cA\text{ and all }\ell\in \NN^E\text{ with }|\ell|_1 = k, \quad \sum_{\gamma\in \NN_{=k}^{\mathbb{M}^{=r}}} L_{a,\ell,\gamma}S_{a,\gamma} = d_{a,\ell} - C^{k-1}_{a,\ell}\]
or equivalently as the system of matrix equations
\[\text{for all }a\in \cA, \quad (L_{a,\ell,\gamma})_{\ell,\gamma}(S_{a,\gamma})_\gamma  = (d_{a,\ell} - C^{k-1}_{a,\ell})_\ell\]
We claim that for each $a\in \cA$, the matrix $L_a:=(L_{a,\ell,\gamma})_{\ell,\gamma}$ has full rank, and prove it by showing that for each $\ell\in\NN^E$ with $|\ell|_1=k$, we may select $\gamma_\ell\in \NN_{=k}^{\mathbb M^r}$ such that the square submatrix of $L_a$ with the columns $\gamma_\ell$ has unit determinant.  

\ 

For each $\ell\in\NN^E$ with $|\ell|_1=k$, let $\gamma_\ell\in \NN_{=k}^{\mathbb M^r}$ be defined by $\gamma_\ell(\mu) = \ell(\mu)$ if $\mu\in E$ and $\gamma_\ell(\mu) = 0$ otherwise.  Now, each entry of the submatrix of $L_a$ with columns $\gamma_\ell$ is $L_{a,\ell',\gamma_\ell}$ for some $\ell$, $\ell'$, which is the coefficient in multidegree $\ell'$ in the expression 
\[\prod_{\mu\in \mathbb{M}^{=r}}\left(\sum_{\eta\in E}c_a'(j_\eta,\mu)x_\eta\right)^{\gamma_\ell(\mu)} = \prod_{\mu\in E}\left(\sum_{\eta\in E}c_a'(j_\eta,\mu)x_\eta\right)^{\ell(\mu)}\]

\ 

Recall that $c_a'(j_\eta,\mu)$ is the coefficient of $x_\eta$ in $(\cE\circ\tau_a)(x_\mu)$.  Since $\cE$ preserves degree, the coefficient of $x_\eta$ in $(\cE\circ\tau_a)(x_\mu)$ is the sum of the coefficients of $x_\nu$ in $\tau_a(x_\mu)$ with $\nu\sim \mu$, that is, $c_a'(j_\eta,\mu) = \sum_{\nu\in \cP_\mu\colon \nu\sim \eta} c_a(j_\nu,\mu)$.  By Theorem~\ref{lma1}, $c_a(j_\nu,\mu)$ is a unit if and only if $\nu = \mu$ and $|\mu| = r$.  Therefore, for $\eta,\mu\in E$, $c'_a(j_\eta,\mu)$ is a unit if and only if $\eta\sim \mu$, i.e.\ $\eta = \mu$.  Therefore, the only term of $\prod_{\mu\in E}\left(\sum_{\eta\in E}c_a'(j_\eta,\mu)x_\eta\right)^{\ell(\mu)}$ with a unit coefficient is $\prod_{\eta\in E}c_a'(j_\eta,\eta)^{\ell(\eta)}x_\eta^{\ell(\eta)}$, i.e.\ the one with multidegree $\ell$.

\ 

Thus, we've shown that $L_{a,\ell',\gamma_\ell}$ is a unit if and only if $\ell' = \ell$.  By Gaussian elimination, the maximal minor of $L_a$ with columns $\gamma_\ell$ has unit determinant.  Therefore, for each $a\in\cA$ we may solve the system $(L_{a,\ell,\gamma})_{\ell,\gamma}(S_{a,\gamma})_\gamma  = (d_{a,\ell} - C^{k-1}_{a,\ell})_\ell$ for the vector $(S_{a,\gamma})_\gamma$.   Finally we need to find $\cs_{\beta,\gamma}$ satisfying $\sum_{\beta\in \mathfrak{B}} \cs_{\beta,\gamma}w_{a,\beta} = S_{a,\gamma}$.  For each $\gamma\in\NN_{= k}^{\mathbb{M}^{=r}}$, we must solve the matrix equation 
$W(\cs_{\beta,\gamma})_{\beta \in \mathfrak{B}}= (S_{a,\gamma})_{a \in \mathcal{A}}$, which is possible since $W$ is of full rank by Lemma~\ref{lem:W}.

\ 

To show that the power series $F$ is restricted, we show that the $\pi$-adic order of the coefficients $\cs_{\beta,\gamma}$ decreases as $|\gamma|_1$ increases.  For a vector $(\alpha_i)$ of elements of $A_\pi$, let $|(\alpha_i)|_\pi = \max_i(|\alpha_i|_\pi)$.  We use the fact that if $M$ is a matrix over $A_\pi$ of full rank and $\mathbf{x}$, $\mathbf{y}$ are vectors with entries in $A_\pi$, and $M\mathbf{x}=\mathbf{y}$, then $|\mathbf{x}|_\pi = |\mathbf{y}|_\pi$.

\ 

Let $t_0=1$ and for $k\geq 1$ set $D_k := \max\{ |d_{a,\ell}|_\pi\colon a\in\mathcal{A}, \ell\in \NN^E\text{ with }|\ell|_1=k\}$ and $t_k := \max\{t_{k-1}/\pi, D_k\}$.  We claim that $|s_{\beta,\gamma}|_\pi \leq t_k$ for all $\gamma$ with $|\gamma|_1=k$.  This will suffice to show that $F$ is restricted.  
When $k=0$ this is obvious.  Suppose that $k\geq 1$ and the statement is shown for $|\gamma|_1 = k-1$.  For each $\gamma$ with $|\gamma|_1=k$ we have that $W(\cs_{\beta,\gamma})_{\beta \in \mathfrak{B}}= (S_{a,\gamma})_{a \in \mathcal{A}}$, and $W$ is of full rank, and so $|(\cs_{\beta,\gamma})_{\beta \in \mathfrak{B}}|_\pi = |(S_{a,\gamma})_{a \in \mathcal{A}}|_\pi$.  Now, for each $a\in\mathcal{A}$, we have that $(L_{a,\ell,\gamma})_{\ell,\gamma}(S_{a,\gamma})_{\gamma\in \NN^{\mathbb{M}^{=r}}_{=k}} = (d_{a,\ell} - C_{a,\ell}^{k-1})_{\ell}$, and since the matrix $(L_{a,\ell,\gamma})$ has full rank, 
\[|(S_{a,\gamma})_{\gamma\in \NN^{\mathbb{M}^{=r}}_{=k}}|_\pi = |(d_{a,\ell} - C_{a,\ell}^{k-1})_{\ell}|_\pi = \max_\ell(|d_{a,\ell}-C_{a,\ell}^{k-1}|_\pi) \leq \max_{\ell}(|d_{a,\ell}|_\pi, |C_{a,\ell}^{k-1}|_\pi)\leq \max(t_k,\max_{\ell}|C_{a,\ell}^{k-1}|).\]
Thus, it suffices now to show that $|C_{a,\ell}^{k-1}|\leq \frac{t_{k-1}}{\pi}$ for all $a\in\cA$ and $\ell\in \NN^E$ with $|\ell|_1=k$.

\ 

Since $C_{a,\ell}^{k-1}$ is the coefficient of $\prod_{\eta\in E}x_\eta^{\ell(\eta)}$ in $F_a^{k-1}$, it is a sum of terms of the form
\[s_{\beta,\gamma}\left(\prod_{\mu\in \mathbb{M}^{r-1}}\prod_{i=1}^{\beta(\mu)}c_a(j^{(\mu,i)},\mu)\right)\left(\prod_{\mu\in \mathbb{M}^{=r}}\prod_{i=1}^{\gamma(\mu)}c_a(j^{(\mu,i)},\mu)\right)\]
where $|\gamma|_1\leq k-1$, $j^{(\mu,i)}\in J_{\mu}$ for each $\mu$ and $i$, and  $|\ell|_1 = \sum_{\mu\in \mathbb{M}^{r-1}}\sum_{i=1}^{\beta(\mu)}|j^{(\mu,i)}|_1 + \sum_{\mu\in \mathbb{M}^{=r}}\sum_{i=1}^{\gamma(\mu)}|j^{(\mu,i)}|_1$. 
By Theorem~\ref{lma1}, for any $\mu\in\mathbb{M}^{r-1}$ and $j\in J_\mu$, $|c_a(j,\mu)|_\pi\leq \frac{1}{\pi^{|j|_1}}$, and for any $\mu\in \mathbb{M}^{=r}$ and $j\in J_\mu$, $|c_a(j,\mu)|_\pi\leq \frac{1}{\pi^{|j|_1 - 1}}$.
Also,
\begin{align*}\sum_{\mu\in \mathbb{M}^{r-1}}\sum_{i=1}^{\beta(\mu)}|j^{(\mu,i)}|_1 + \sum_{\mu\in \mathbb{M}^{=r}}\sum_{i=1}^{\gamma(\mu)}\left(|j^{(\mu,i)}|_1 - 1\right) 
&= \sum_{\mu\in \mathbb{M}^{r-1}}\sum_{i=1}^{\beta(\mu)}|j^{(\mu,i)}|_1+ \sum_{\mu\in \mathbb{M}^{=r}}\sum_{i=1}^{\gamma(\mu)}|j^{(\mu,i)}|_1  - \sum_{\mu\in \mathbb{M}^{=r}}\gamma(\mu) \\
& \geq  |\ell|_1 - (k-1) = 1.\end{align*} By the induction hypothesis, $|s_{\beta,\gamma}|_\pi \leq t_{k-1}$. Putting these together, we see $|C_{a,\ell}^{k-1}|_\pi \leq \frac{t_{k-1}}{\pi}$. Thus $|s_{\beta,\gamma}|_\pi\leq t_k$ when $|\gamma|_1=k$ as desired. The terms $t_k \to 0$  $\pi$-adically as $k \to {}\infty$, thus $F$ is restricted.
\end{proof}

The proof of Theorem~\ref{thm:analtopado} now comes down to showing that the hypotheses of Theorem~\ref{thm:gutsofproof} hold.

\begin{proof}[Proof of Theorem~\ref{thm:analtopado}] We first show how to construct $K^\mu$ as needed in Theorem~\ref{thm:gutsofproof}. Specifically, note that as $\phi_1 = \id$ we may view in a distinguished way $\mathbb{M}_1 \subset \mathbb{M}_n$. This means an arithmetic partial differential operator in the variables corresponding to $\mathbb{M}_1$ is still an arithmetic partial differential operator, so we may without loss of generality construct $K^\mu$ using $\mathbb{M}_1$.

\

All words in $\mathbb{M}_1^{r}$ are equivalent, and the only word in $\mathbb M_1^{=r}$ is $1^r$.  Furthermore, by Lemma~\ref{lem:ODE}, $\delta_{1^r} = \delta_1^r(a) = 0$ for all $a\in \cA$.  Therefore, taking $K^{1^r} = 0$ satisfies the hypotheses of Theorem~\ref{thm:gutsofproof} for $n = 1$.  Thus, we have that the map $\cN|_{\Gamma_{\mathbb{M}^r_1}}:\Gamma_{\mathbb{M}^r_1}\to \Gamma_{\mathbb{M}^{=r}_1}^{\oplus \cA}$ is surjective. We have then that any locally constant function is an arithmetic partial differential operator. As such, there is $K^\mu \in \Gamma_{\mathbb{M}^r_1} \subset \Gamma$ such that $\cN(K^\mu) = (\delta_\mu(a))_{a\in \cA}$ for each $\mu\in\mathbb{M}^{=r}$. Now, using these $K^\mu$, we may apply Theorem~\ref{thm:gutsofproof} in the full context, to show $\cN:\Gamma\to \Gamma_E^{\oplus \cA}$ is surjective as desired.	
\end{proof}

\end{document}